\newtheorem{defn}{Definition}[section]
\newtheorem{thm}[defn]{Theorem}
\newtheorem{lem}[defn]{Lemma}
\newtheorem{prop}[defn]{Proposition}
\newtheorem{cor}[defn]{Corollary}
\newtheorem{ex}[defn]{Example}
\newtheorem{re}[defn]{Remark}
\def\K{{\bf K}}
\def\ad{{\rm ad}}
\def\dim{{{\rm dim}}}
\def\ch{{{\rm ch}}}
\def\Der{\rm Der}
\def\Id{{{\rm Id}}}
\begin{document}
\title{{\bf Representations and $T$*-extensions of hom-Jordan-Lie algebras}}
\author{\normalsize \bf Jun Zhao,  Liangyun Chen, Lili Ma}
\date{{{\small{School of Mathematics and Statistics, Northeast Normal University,  \\Changchun 130024,  China
 }}}} \maketitle
\date{}
{\bf\begin{center}{Abstract}\end{center}}

 The purpose of this paper is to study representations and $T$*-extensions of hom-Jordan-Lie algebras.
  In particular, adjoint representations, trivial representations, deformations and many properties of $T$*-extensions
  of hom-Jordan-Lie algebras are studied in detail. Derivations and central extensions of hom-Jordan-Lie algebras are
  also discussed as an application.

\noindent\textbf{Keywords:} hom-Jordan-Lie algebras,
representations,  derivations,  deformations,  $T$*-extensions

\noindent{\textbf{MSC(2010):}}  17A99, 17B56

\renewcommand{\thefootnote}{\fnsymbol{footnote}}
\footnote[0]{ Corresponding author(L. Chen): chenly640@nenu.edu.cn.}
\footnote[0]{ Supported by  NNSF of China (No. 11171055),  Natural
Science Foundation of  Jilin province (No. 201115006), Scientific
Research Foundation for Returned Scholars
    Ministry of Education of China. }

\section{Introduction}
 The definition of hom-Lie algebras was introduced by Hartwig,  Larsson
and silverstory [4] to character the structures on deformations of
the Witt and the Virasoro algebras. If the skew-symmetric bracket of
the hom-Lie algebra $(L, [\cdot, \cdot]_L,\alpha)$ is replaced by
Jordan-symmetric bracket, then the triple $(L, [\cdot,
\cdot]^{'},\alpha)$ is called a hom-Jordan-Lie algebra. It is clear
that the hom-Jordan-Lie algebra $(L, [\cdot, \cdot]_L, {\rm id})$ is
the Jordan-Lie algebra $L$ itself. Recently, hom-Lie algebras were
generalized to hom-Lie superalgebras by Ammar and Makhlouf.
Quadratic hom-Lie algebras were studied first in [5]. Then the study
was extended to color hom-Lie algebras in [1].  More applications of
the hom-Lie algebras, hom-algebras and hom-Lie superalgebras can be
found in [1-8, 11].

 The notion of Jordan-Lie algebras was introduced in [9], which is intimately related to both Lie and Jordan superalgebras.
Engel's theorem of Jordan-Lie algebras was proved, and some
properties of Cartan subalgebras of Jordan-Lie algebras were given
in [10]. The purpose of this paper is to study representations and
$T$*-extensions of hom-Jordan-Lie algebras.
  In particular, adjoint representations, trivial representations, deformations and many properties of $T$*-extensions
  of hom-Jordan-Lie algebras are studied in detail. Derivations and central extensions of hom-Jordan-Lie algebras are
  also discussed as an application.

The paper is organized as follows. In Section 2   we give the definition of hom-Jordan-Lie algebras, and show that the direct sum of
two hom-Jordan-Lie algebras is still a hom-Jordan-Lie algebra. A linear map between hom-Jordan-Lie algebras is a morphism if and only if
its graph is a hom subalgebra. In section 3 we study derivations of multiplicative hom-Jordan-Lie algebras. For any nonnegative integer $k$,
we define $\alpha^{k}$-derivations of multiplicative hom-Jordan-Lie algebras. Considering the direct sum of the space of $\alpha$-derivations, we prove that it is a Lie algebra (Proposition 3.3). In particular, any $\alpha$-derivation gives rise to a derivation extension of the multiplicative hom-Jordan-Lie algebra $(L, [\cdot, \cdot]_L, \alpha)$ (Theorem 3.5). In Section 4 we give the definition of representations of multiplicative hom-Jordan-Lie algebras. We can obtain the semidirect product multiplicative hom-Jordan-Lie algebra $(L\oplus V, [\cdot, \cdot]_{\rho_{A}}, \alpha+A)$ associated to any representation $\rho_{A}$ on $V$ of the multiplicative hom-Jordan-Lie algebra $(L, [\cdot, \cdot]_L, \alpha)$ (Proposition 4.4). In Section 5 we study trivial representations of multiplicative hom-Jordan-Lie algebras. We show that central extensions of a multiplicative hom-Jordan-Lie algebra are controlled by the second cohomology with coefficients in the trivial representation (Theorem 5.1). In Section 6 we study the adjoint representation of a regular hom-Jordan-Lie algebra $(L, [\cdot, \cdot]_L, \alpha)$. For any integer $s$, we define the $\alpha^{k+1}$-derivations. We show that a 1-cocycle associated to the $\alpha ^{k}$-derivation is exactly an  $\alpha^{k+1}$-derivation of the regular hom-Jordan-Lie algebra $(L, [\cdot, \cdot]_L, \alpha)$ in some conditions (Proposition 6.3). We also give the definition of hom-Nijienhuis operators of regular hom-Jordan-Lie algebras. We show that the deformation generated by a hom-Nijienhuis operator is trivial. In Section 7 we study $T$*-extensions of hom-Jordan-Lie algebras, show that $T^*$-extensions preserve many properties such as nilpotency, solvability and decomposition in some sense.
\section{hom-Jordan-Lie algebras}
\begin{defn}$[9]$
A Jordan-Lie algebra is a couple  $(L, [\cdot, \cdot]_L)$ consisting of a  vector space $L$ and a bilinear map (bracket) $[ \cdot, \cdot]_L:L\times L\rightarrow L$ satisfying
\begin{equation}[x, y]=-\delta[y, x] ,\delta=\pm 1,\end{equation}
$$[x, [y, z]]+[y, [z, x]]+[z, [x, y]]=0. \qquad\forall x, y, z\in\L.$$
\end{defn}
\begin{defn}
(1) A hom-Jordan-Lie algebra is a triple  $(L, [\cdot, \cdot]_L, \alpha)$ consisting of a  vector space $L$,  a bilinear map (bracket) $[\cdot, \cdot]_L:L\times L\rightarrow L$ and a map $\alpha:L\rightarrow L$ satisfying
\begin{equation}[x, y]=-\delta[y, x] ,\delta=\pm 1,\end{equation}
$$[\alpha(x), [y, z]]+[\alpha(y), [z, x]]+[\alpha(z), [x, y]]=0, \qquad\forall x, y, z\in\L.$$

(2) A hom-Jordan-Lie algebra is multiplicative  if $\alpha$ is an algebra morphism, i.e. for any $x,  y,  \in L$, we have $\alpha([x, y]_L)=[\alpha(x), \alpha(y)]_L;$

(3) A hom-Jordan-Lie algebra is regular if $\alpha$ is an algebra automorphism;

(4) A subvector space $\eta\subseteq L$ is a hom subalgebra of $(L, [\cdot, \cdot]_L, \alpha)$ if $\alpha(\eta)\subseteq\eta$  and$$[x, y]_L\in\eta,  \qquad\forall x, y\in\eta;$$

(5) A subvector space $\eta\subseteq L$ is a hom ideal of $(L, [\cdot, \cdot]_L, \alpha)$ if $\alpha(\eta)\subseteq\eta$ and $$[x, y]_L\in\eta,\qquad\forall x\in\eta,y\in\L.$$
\end{defn}
\begin{defn}
A $\delta$-hom associative algebra is a couple  $(L,\alpha)$ consisting of a  vector space $L$,  a bilinear map on $L$ and a map $\alpha:L\rightarrow L$ satisfying
\begin{equation}(\lambda x)y=x(\lambda y)=\lambda(xy),\delta=\pm 1,\end{equation}
$$\alpha(x)(yz)=\delta(xy)\alpha(z).\qquad\forall x, y, z\in\L.$$
\end{defn}
\begin{prop}\label{proposition2.1}
A couple  $(L,\alpha)$ is a $\delta$-hom associative algebra, define a bilinear map (bracket) $[\cdot, \cdot]_L:L\times L\rightarrow L$ satisfying
\begin{equation}[x,y]=xy-\delta yx.
\end{equation}
Then $(L, [\cdot, \cdot]_L, \alpha)$ is a hom-Jordan-Lie algebra.
\end{prop}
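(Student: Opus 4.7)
The plan is to verify the two defining axioms of a hom-Jordan-Lie algebra directly from the definition of the bracket $[x,y]=xy-\delta yx$ together with the hom-associativity relation $\alpha(x)(yz)=\delta(xy)\alpha(z)$ and the bilinearity condition $(\lambda x)y=x(\lambda y)=\lambda(xy)$. I will freely use $\delta^{2}=1$ since $\delta=\pm1$.

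First I would check the $\delta$-symmetry $[x,y]=-\delta[y,x]$. This is essentially a one-line calculation: $-\delta[y,x]=-\delta(yx-\delta xy)=\delta^{2}xy-\delta yx=xy-\delta yx=[x,y]$, which gives (2.2). No subtleties arise here.

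The substantive step is verifying the hom-Jordan-Lie identity
\[
[\alpha(x),[y,z]]+[\alpha(y),[z,x]]+[\alpha(z),[x,y]]=0.
\]
I would expand each of the three outer brackets according to $[u,v]=uv-\delta vu$, producing twelve terms of the form $\alpha(x)(yz)$, $\alpha(x)(zy)$, $(yz)\alpha(x)$, $(zy)\alpha(x)$ and cyclic analogues, each with a power of $\delta$ as coefficient. To every such term I would then apply the hom-associativity identity in one of its two forms: directly, as $\alpha(u)(vw)=\delta(uv)\alpha(w)$, or in the ``mirrored'' form $(vw)\alpha(u)=\delta\alpha(v)(wu)$ obtained by relabeling variables in the same axiom. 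After this normalization every term is written as either $(ab)\alpha(c)$ or $\alpha(a)(bc)$ with a definite $\delta$-weight, and the main task becomes a bookkeeping exercise of grouping the twelve normalized terms into six cancelling pairs, using $\delta^{2}=1$ to collapse the signs.

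The main obstacle is purely combinatorial: keeping consistent track of the cyclic substitutions $(x,y,z)\mapsto(y,z,x)\mapsto(z,x,y)$ and of the sign changes produced each time hom-associativity or the defining antisymmetry is invoked. I expect no conceptual difficulty beyond organizing these twelve terms carefully; once they are all rewritten in a common normal form the cancellation is forced by the relation $\delta^{2}=1$ and the symmetry of the expression under cyclic permutation of $x,y,z$.
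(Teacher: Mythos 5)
Your proposal is correct and follows essentially the same route as the paper's proof: expand the cyclic sum into twelve terms of the form $\alpha(a)(bc)$ and $(ab)\alpha(c)$, then cancel them in six pairs using the hom-associativity axiom $\alpha(x)(yz)=\delta(xy)\alpha(z)$ both directly and in its rearranged form $(xy)\alpha(z)=\delta\alpha(x)(yz)$. The only difference is that you also verify the $\delta$-antisymmetry explicitly, which the paper omits as trivial.
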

\begin{proof}
\begin{eqnarray*}
&&[\alpha(x),[y,z]]+c.p.(x,y,z)=[\alpha(x),yz-\delta zy]+c.p.(x,y,z)\\
&=&\alpha(x)(yz)-\delta(yz)\alpha(x)-\delta\alpha(x)(zy)+(zy)\alpha(x)\\
&&+\alpha(y)(zx)-\delta(zx)\alpha(y)-\delta\alpha(y)(xz)+(xz)\alpha(y)\\
&&+\alpha(z)(xy)-\delta(xy)\alpha(z)-\delta\alpha(z)(yx)+(yx)\alpha(z),
\end{eqnarray*}
\begin{eqnarray*}
\alpha(x)(yz)=\delta(xy)\alpha(z),(zy)\alpha(x)=\delta\alpha(z)(yx);\\
\alpha(y)(zx)=\delta(yz)\alpha(x),(xz)\alpha(y)=\delta\alpha(x)(zy);\\
\alpha(z)(xy)=\delta(zx)\alpha(y),(yx)\alpha(z)=\delta\alpha(y)(xz).
\end{eqnarray*}
Then $[\alpha(x),[y,z]]+c.p.(x,y,z)=0$.
\end{proof}
\begin{prop}
Given two hom-Jordan-Lie algebras $(L, [\cdot, \cdot]_L, \alpha)$ and $(\Gamma, [\cdot, \cdot]_\Gamma, \beta)$, there is a  hom-Jordan-Lie algebra $(L\oplus\Gamma, [\cdot, \cdot]_{L\oplus\Gamma}, \alpha+\beta)$, where the bilinear map $[\cdot, \cdot]_{L\oplus\Gamma}:(L\oplus\Gamma)\times (L\oplus\Gamma)\rightarrow L\oplus\Gamma$ is given by
$${[u_1+v_1, u_2+v_2]}_{L\oplus\Gamma}={[u_1, u_2]}_L+{[v_1, v_2]}_\Gamma,  \qquad\forall  u_1\in L, u_2\in L,  \forall  v_1\in \Gamma, v_2\in \Gamma,  $$
and the linear map $(\alpha+\beta):L\oplus\Gamma \rightarrow L\oplus\Gamma$ is given by
$$(\alpha+\beta)(u+v)=\alpha(u)+\beta(v), \qquad\forall  u\in L,  v\in \Gamma.$$
 \end{prop}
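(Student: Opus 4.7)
The plan is to verify directly that the triple $(L\oplus\Gamma, [\cdot,\cdot]_{L\oplus\Gamma}, \alpha+\beta)$ satisfies the two defining axioms of Definition 2.2, namely the $\delta$-symmetry condition and the hom-Jacobi identity. Since the bracket and the map are defined componentwise, every identity on $L\oplus\Gamma$ should decouple into the corresponding identity on $L$ and on $\Gamma$, so the verification reduces to applying the known axioms for each summand separately. I would first point out the tacit assumption that both hom-Jordan-Lie algebras share the same sign $\delta$, since otherwise the single relation $[x,y]=-\delta[y,x]$ cannot hold simultaneously on both factors.

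The first step is the $\delta$-symmetry. Taking $u_i\in L$, $v_i\in\Gamma$ and expanding
$[u_1+v_1,u_2+v_2]_{L\oplus\Gamma}=[u_1,u_2]_L+[v_1,v_2]_\Gamma$, I apply the axiom in each component, obtaining $-\delta[u_2,u_1]_L-\delta[v_2,v_1]_\Gamma=-\delta[u_2+v_2,u_1+v_1]_{L\oplus\Gamma}$. This is a one-line check.

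The second step is the hom-Jacobi identity. For $x_i=u_i+v_i$ with $u_i\in L$ and $v_i\in\Gamma$, I would expand $[(\alpha+\beta)(x_1),[x_2,x_3]_{L\oplus\Gamma}]_{L\oplus\Gamma}$ using the componentwise definitions of the bracket and the map, which gives $[\alpha(u_1),[u_2,u_3]_L]_L+[\beta(v_1),[v_2,v_3]_\Gamma]_\Gamma$; adding the two cyclic permutations and grouping $L$-terms together and $\Gamma$-terms together yields exactly the sum of the hom-Jacobi expressions for $(L,[\cdot,\cdot]_L,\alpha)$ and $(\Gamma,[\cdot,\cdot]_\Gamma,\beta)$, both of which vanish by hypothesis.

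I do not anticipate a genuine obstacle: the entire argument is a routine componentwise computation. The only point that deserves care is notational bookkeeping when expanding the cyclic sum, together with the remark on the shared value of $\delta$; beyond that, no further structural input (such as multiplicativity of $\alpha$ or $\beta$) is needed, since the proposition is stated for arbitrary hom-Jordan-Lie algebras.
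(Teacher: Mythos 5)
Your proposal is correct and follows essentially the same route as the paper: both check the $\delta$-symmetry and the hom-Jacobi identity componentwise, with each reducing to the corresponding axiom on $L$ and on $\Gamma$ separately. Your added remark that both factors must share the same sign $\delta$ is a sensible clarification the paper leaves implicit, but it does not change the argument.
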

\begin{proof}
 For any  $u_i\in L,   v_i\in \Gamma, $ we have
$${[u_1+v_1, u_2+v_2]}_{L\oplus\Gamma}={[u_1, u_2]}_L+{[v_1, v_2]}_\Gamma, $$
\begin{eqnarray*}
-\delta{[u_2+v_2, u_1+v_1]}_{L\oplus\Gamma}=-\delta({[u_2, u_1]}_L+{[v_2, v_1]}_\Gamma)
={[u_1, u_2]}_L+{[v_1, v_2]}_\Gamma.
\end{eqnarray*}
By a direct computation, we have
\begin{eqnarray*}
&&[(\alpha+\beta)(u_1+v_1), [u_2+v_2, u_3+v_3]_{L\oplus\Gamma}]_{L\oplus\Gamma}+c.p.((u_1+v_1), (u_2+v_2), (u_3+v_3))\\
&=&[\alpha(u_1)+\beta(v_1), [u_2, u_3]_{L}+[v_2, v_3]_{\Gamma}]_{L\oplus\Gamma}+c.p.\\
&=&[\alpha(u_1), [u_2, u_3]_L]_L+c.p.(u_1, u_2, u_3)+[\beta(v_1), [v_2, v_3]_\Gamma]_\Gamma+c.p.(v_1, v_2, v_3)\\
&=&0,
\end{eqnarray*}
where $c.p.(a, b, c)$ means the cyclic permutations of $a,  b,  c$.
\end{proof}
\begin{defn}Let $(L, [\cdot, \cdot]_L, \alpha)$ and $(\Gamma, [\cdot, \cdot]_\Gamma, \beta)$ be two hom-Jordan-Lie algebras. A map $\phi:L \rightarrow \Gamma$ is said to be a morphism of hom-Jordan-Lie algebras if
\begin{equation} \phi[u, v]_L=[\phi(u), \phi(v)]_\Gamma, \qquad\forall  u, v\in L, \end{equation}
\begin{equation}\phi\circ\alpha=\beta\circ\phi. \qquad\qquad\qquad\qquad\end{equation}

Denote by $\mathfrak{G}_\phi\in L\oplus\Gamma$ is the graph of a linear map $\phi:L \rightarrow \Gamma$.
\end{defn}
\begin{prop}\label{proposition2.1}
A map $\phi:(L, [\cdot, \cdot]_L, \alpha)\rightarrow(\Gamma, [\cdot, \cdot]_\Gamma, \beta)$ is a morphism of hom-Jordan-Lie algebras if and only if the graph $\mathfrak{G}_\phi\in L\oplus\Gamma$ is a hom subalgebra of  $(L\oplus\Gamma, [\cdot, \cdot]_{L\oplus\Gamma}, \alpha+\beta)$.
\end{prop}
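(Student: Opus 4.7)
The plan is to prove both directions by unpacking the definition of the graph $\mathfrak{G}_\phi = \{(u, \phi(u)) : u \in L\}$ and comparing with the two defining conditions (2.5) and (2.6) of a hom-Jordan-Lie morphism. The key observation is that an element of $L \oplus \Gamma$ lies in $\mathfrak{G}_\phi$ if and only if its second component is the image under $\phi$ of its first component, so the subalgebra conditions $[\mathfrak{G}_\phi, \mathfrak{G}_\phi]_{L\oplus\Gamma} \subseteq \mathfrak{G}_\phi$ and $(\alpha+\beta)(\mathfrak{G}_\phi) \subseteq \mathfrak{G}_\phi$ translate directly into identities involving $\phi$.

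For the forward implication, I would assume $\phi$ is a morphism and take two arbitrary elements $(u, \phi(u)), (v, \phi(v)) \in \mathfrak{G}_\phi$. Applying the bracket on $L \oplus \Gamma$ from the previous proposition yields $([u,v]_L, [\phi(u), \phi(v)]_\Gamma)$, and then equation (2.5) rewrites the second slot as $\phi([u,v]_L)$, placing the result back in $\mathfrak{G}_\phi$. Similarly, $(\alpha+\beta)(u, \phi(u)) = (\alpha(u), \beta(\phi(u)))$, and equation (2.6) replaces $\beta(\phi(u))$ by $\phi(\alpha(u))$, so this is again in $\mathfrak{G}_\phi$.

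For the reverse implication, I would assume $\mathfrak{G}_\phi$ is a hom subalgebra and read the two containment conditions backwards. Closure under the bracket forces $([u,v]_L, [\phi(u), \phi(v)]_\Gamma) \in \mathfrak{G}_\phi$ for every $u,v \in L$, and by the defining property of the graph the second component must equal $\phi$ applied to the first, giving (2.5). Closure under $\alpha + \beta$ forces $(\alpha(u), \beta(\phi(u))) \in \mathfrak{G}_\phi$, hence $\beta(\phi(u)) = \phi(\alpha(u))$, which is (2.6).

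There is no real obstacle here; the entire content is bookkeeping about what membership in the graph means. The only mild point to be careful about is making sure both pieces of the ``hom subalgebra'' condition (closure under the bracket \emph{and} $\alpha$-invariance) are invoked, since each corresponds to exactly one of the two conditions (2.5) and (2.6) in the definition of a morphism, and omitting either half would give only one of the two morphism axioms.
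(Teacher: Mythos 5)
Your proposal is correct and follows essentially the same route as the paper's own proof: in both directions you unpack membership in $\mathfrak{G}_\phi$, match closure under the bracket with the condition $\phi[u,v]_L=[\phi(u),\phi(v)]_\Gamma$, and match $(\alpha+\beta)$-invariance with $\phi\circ\alpha=\beta\circ\phi$. Nothing is missing.
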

\begin{proof}
Let $\phi:(L, [\cdot, \cdot]_L, \alpha)\rightarrow(\Gamma, [\cdot, \cdot]_\Gamma, \beta)$ be a morphism of hom-Jordan-Lie algebras, then for any $u, v\in L$, we have
$$[u+\phi(u), v+\phi(v)]_{L\oplus\Gamma}=[u, v]_{L}+[\phi(u), \phi(v)]_{\Gamma}=[u, v]_{L}+\phi[u, v]_L.$$
Thus the graph $\mathfrak{G}_\phi$ is closed under the bracket operation $[\cdot, \cdot]_{L\oplus\Gamma}.$ Furthermore, by (9), we have
$$(\alpha+\beta)(u+\phi(u))=\alpha(u)+\beta\circ\phi(u)=\alpha(u)+\phi\circ\alpha(u),$$
which implies that $(\alpha+\beta)(\mathfrak{G}_\phi)\subset \mathfrak{G}_\phi.$ Thus $\mathfrak{G}_\phi$ is a hom subalgebra of $(L\oplus\Gamma, [\cdot, \cdot]_{L\oplus\Gamma}, \alpha+\beta)$.

Conversely,  if the graph $\mathfrak{G}_\phi\subset L\oplus\Gamma$ is a hom subalgebra of
$(L\oplus\Gamma, [\cdot, \cdot]_{L\oplus\Gamma}, \alpha+\beta)$, then we have
$$[u+\phi(u), v+\phi(v)]_{L\oplus\Gamma}=[u, v]_{L}+[\phi(u), \phi(v)]_{\Gamma}\in\mathfrak{G}_\phi, $$
which implies that $$[\phi(u), \phi(v)]_{\Gamma}=\phi[u, v]_{L}.$$
Furthermore,  $(\alpha+\beta)(\mathfrak{G}_{\phi})\subset \mathfrak{G}_{\phi}$ yields that
$$(\alpha+\beta)(u+\phi(u))=\alpha(u)+\beta\circ\phi(u)\in\mathfrak{G}_\phi, $$
which is equivalent to the condition $\beta\circ\phi(u)=\phi\circ\alpha(u), $ i.e. $\beta\circ\phi=\phi\circ\alpha.$ Therefore,  $\phi$ is a morphism of hom-Jordan-Lie algebras.
\end{proof}
\section{Derivations of hom-Jordan-Lie algebras}
Let $(L, [\cdot, \cdot]_L, \alpha)$ be a multiplicative hom-Jordan-Lie algebra. For any nonnegative integer $k$, denote by $\alpha^k$ the $k$-times composition of $\alpha$,  i.e.\\
$$\alpha^k=\alpha\circ\cdots\circ\alpha \quad(k-times).$$
In particular, $\alpha^0=\mathrm{id}$ and $\alpha^1=\alpha.$ If $(L, [\cdot, \cdot]_L, \alpha)$ is a regular hom-Jordan-Lie algebra, we denote by $\alpha^{-k}$ the $k$-times composition of $\alpha^1, $ the inverse of $\alpha.$
\begin{defn}
For any nonnegative integer $k$, a linear map $D:L \rightarrow L$ is called an $\alpha^k$-derivation of the multiplicative  hom-Jordan-Lie algebra $(L, [\cdot, \cdot]_L, \alpha)$, if
\begin{equation} [D, \alpha]=0, \quad i.e.\quad{D}\circ\alpha=\alpha\circ{D}, \end{equation}
and
\begin{equation}D[u, v]_L=\delta^{k}([D(u), \alpha^{k}(v)]_L+[\alpha^{k}(u), D(v)]_L), \qquad\forall u, v\in L.\end{equation}
For a regular hom-Jordan-Lie algebra, $\alpha^{-k}$-derivations can be defined similarly.
\end{defn}

Denote by $ \texttt{\Der}_{\alpha^s}(L)$ is the set of $\alpha^s$-derivations of the multiplicative  hom-Jordan-Lie algebra $(L, [\cdot, \cdot]_L, \alpha)$. For any $u\in L$ satisfying $\alpha(u)=u$, define $D_{k}(u):L \rightarrow L$ by\\
$$D_{k}(u)(v)=\delta[u, \alpha^{k}(v)]_L, \,\delta^{k}=1, \quad \forall v\in L.$$
Then $D_{k}(u)$ is an $\alpha^{k}$-derivation.which we call an \textbf{inner} $\alpha^{k+1}$-derivation. In fact,  we have
$$D_{k}(u)(\alpha(v))=\delta[u, \alpha^{k+1}(v)]_L=\alpha(\delta[u, \alpha^{k}(v)]_L)=\alpha\circ D_{k}(u)(v). $$
On the other hand,  we have
\begin{eqnarray*}&&D_{k}u([v, w]_L)=\delta[u, \alpha^{k}([v, w]_L)]_L=\delta[\alpha(u), [\alpha^{k}(v), \alpha^{k}(w)]_L]_L\\
&=&-\delta([\alpha^{k+1}(v), [\alpha^{k}(w), u]_L]_L+[\alpha^{k+1}(w), [u, \alpha^{k}(v)]_L]_L)\\
&=&-\delta[\alpha^{k+1}(v), [\alpha^{k}(w), u]_L]_L-\delta[\alpha^{k+1}(w), [u, \alpha^{k}(v)]_L]_L)\\
&=&\delta^{k+1}[\alpha^{k+1}(v), \delta[u, \alpha^{k}(w)]_L]_L+\delta^{k+1}[\delta[u, \alpha^{k}(v)]_L), \alpha^{k+1}(w)]_L\\
&=&\delta^{k+1}([D_{k}u(v), \alpha^{k+1}(w)]_L+[\alpha^{k+1}(v), D_{k}u(w)]_L).
\end{eqnarray*}
Therefore,  $D_{k}(u)$ is an  $\alpha^{k+1}$-derivation. Denote by $ \mathrm{Inn}_{\alpha^s}($\emph{L}$)$ the set of inner  $\alpha^{k}$-derivations, i.e. \begin{equation}\mathrm{Inn}_{\alpha^{k}}(L)=\{\delta[u, \alpha^{k-1}(\cdot)]_L|u \in L, \alpha(u)=u,\delta^{k}=1\}.\end{equation}

For any $D \in \texttt{\Der}_{\alpha^{k}}(L)$ and $D^{'} \in \texttt{\Der}_{\alpha^{s}}(L), $ define their commutator $[D, D^{'}]$ as usual:
\begin{equation}[D, D^{'}]=D \circ D^{'}-D^{'} \circ D.\end{equation}
\begin{lem}\label{lemma3.2}For any $D \in \texttt{\Der}_{\alpha^{k}}(L)$ and $D^{'} \in \texttt{\Der}_{\alpha^{s}}(L)$, we have\\
$$[D, D^{'}]\in \texttt{\Der}_{\alpha^{k+s}}(L).$$
\end{lem}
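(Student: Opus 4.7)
The plan is to verify the two defining properties of an $\alpha^{k+s}$-derivation from Definition 3.1: first, that $[D,D']$ commutes with $\alpha$, and second, that it satisfies the twisted Leibniz identity with the scalar $\delta^{k+s}$.

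First, I would dispatch the commutation with $\alpha$. Since $D\circ\alpha=\alpha\circ D$ and $D'\circ\alpha=\alpha\circ D'$ by hypothesis, a one-line substitution gives
\[
[D,D']\circ\alpha=DD'\alpha-D'D\alpha=D\alpha D'-D'\alpha D=\alpha DD'-\alpha D'D=\alpha\circ[D,D'].
\]
An immediate induction then yields $D\circ\alpha^s=\alpha^s\circ D$ and $D'\circ\alpha^k=\alpha^k\circ D'$, which will be the crucial tool in the next step.

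For the derivation identity, I would expand $D(D'[u,v]_L)$ and $D'(D[u,v]_L)$ separately, applying Definition 3.1(7) first to the inner bracket and then to the outer bracket. The inner expansion carries a factor $\delta^s$ (resp.\ $\delta^k$); applying the second derivation to each of the two resulting terms produces a factor $\delta^k$ (resp.\ $\delta^s$), giving an overall $\delta^{k+s}$ in both expressions. Each expansion yields four summands: two ``diagonal'' terms involving $DD'(u)$, $DD'(v)$ (respectively $D'D(u)$, $D'D(v)$) paired with $\alpha^{k+s}$, and two ``cross'' terms of the form $[\alpha^k D'(u),D\alpha^s(v)]_L$ and $[D\alpha^s(u),\alpha^k D'(v)]_L$ (respectively with $D,D'$ swapped). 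Using the commutations $D\circ\alpha^s=\alpha^s\circ D$ and $D'\circ\alpha^k=\alpha^k\circ D'$ established above, the cross terms in the two expansions coincide pairwise as elements of $L\times L$, so they cancel when we subtract. What remains is exactly
\[
\delta^{k+s}\bigl([[D,D'](u),\alpha^{k+s}(v)]_L+[\alpha^{k+s}(u),[D,D'](v)]_L\bigr),
\]
which is the required identity.

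The only obstacle I anticipate is the bookkeeping: tracking eight bracketed terms at once, confirming that the $\delta$-factors combine correctly to $\delta^{k+s}$, and matching the cross terms in the correct order so that they really do cancel rather than merely resemble each other. Once the commutation of $D$ with $\alpha^s$ and of $D'$ with $\alpha^k$ is in hand, the cancellation is purely mechanical and no Jordan-Lie identity (2) is needed for this lemma.
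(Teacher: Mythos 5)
Your proposal is correct and follows essentially the same route as the paper's proof: expand $D\circ D'$ and $D'\circ D$ on $[u,v]_L$ using the two twisted Leibniz rules to get an overall factor $\delta^{k+s}$ on eight terms, cancel the cross terms via $D\circ\alpha^{s}=\alpha^{s}\circ D$ and $D'\circ\alpha^{k}=\alpha^{k}\circ D'$, and check the commutation with $\alpha$ separately. Your observation that the hom-Jordan-Jacobi identity is not needed is also consistent with the paper's argument.
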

\begin{proof}
For any $u, v \in L$, we have
\begin{eqnarray*}&&[D, D^{'}]([u, v]_L)=D\circ D^{'}([u, v]_L)-D^{'}\circ D([u, v]_L)\\
&=&\delta^{s}D([D^{'}(u), \alpha^{s}(v)]_L+[\alpha^{s}(u), D^{'}(v)]_L)\\
&&-\delta^{k}D^{'}([D(u), \alpha^{k}(v)]_L+[\alpha^{k}(u), D(v)]_L)\\
&=&\delta^{s}D[D^{'}(u), \alpha^{s}(v)]_L+\delta^{s}D[\alpha^{s}(u), D^{'}(v)]_L\\
&&-\delta^{k}D^{'}[D(u), \alpha^{k}(v)]_L-\delta^{k}D^{'}[\alpha^{k}(u), D(v)]_L\\
&=&\delta^{k+s}([D\circ D^{'}(u), \alpha^{k+s}(v)]_L+[\alpha^{k}\circ D^{'}(u), D\circ\alpha^{s}(v)]_L\\
&&+[D\circ \alpha^{s}(u), \alpha^{k} \circ D^{'}(v)]_L+[\alpha^{k+s}\circ D(u), D \circ D^{'}(v)]_L\\
&&-[D^{'} \circ D(u), \alpha^{k+s}(v)]_L-[\alpha^{s}\circ D(u), D^{'}\circ\alpha^{k}(v)]_L\\
&&-[D^{'}\circ \alpha^{k}(u), \alpha^{s} \circ D(v)]_L-[\alpha^{k+s}(u), D ^{'} \circ D(v)]_L).
\end{eqnarray*}
Since $D$ and $D^{'}$ satisfy
$$D\circ \alpha=\alpha \circ D, \qquad D^{'} \circ \alpha=\alpha \circ D^{'}, $$
we have
$$D\circ \alpha^{s}=\alpha^{s} \circ D, \qquad D^{'} \circ \alpha^{k}=\alpha^{k} \circ D^{'}.$$
Therefore,  we have
\begin{eqnarray*}
[D, D^{'}]([u, v]_L)&=&\delta^{k+s}([D\circ D^{'}(u)-D^{'}\circ D(u), \alpha^{k+s}(v)]_L\\
&&+[\alpha^{k+s}(u), D\circ D^{'}(u)-D^{'}\circ D(v)]_L)\\
&=&\delta^{k+s}([[D, D^{'}](u), \alpha^{k+s}(v)]_L+[\alpha^{k+s}(u), [D, D^{'}](v), ]_L).
\end{eqnarray*}
Furthermore, it is straightforward to see that
\begin{eqnarray*}
[D, D^{'}]\circ \alpha&=&D\circ D^{'}\circ\alpha-D\circ D^{'}\circ\alpha\\
&=&\alpha\circ D\circ D^{'}-\alpha\circ D\circ D^{'}\\
&=&\alpha \circ [D, D^{'}],
\end{eqnarray*}
which yields that $[D, D^{'}]\in \texttt{\Der}_{\alpha^{k+s}}(L)$.
\end{proof}
Denote by \begin{eqnarray}\texttt{\Der}(L)=\bigoplus_{k\geq 0} \texttt{\Der}_{\alpha^{k}}(L).\end{eqnarray}%1ºá
By Lemma 3.2,  obviously we have
\begin{prop}With the above notations,  $ \texttt{\Der}(L)$ is a Lie algebra.\end{prop}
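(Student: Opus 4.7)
The plan is to view $\mathrm{Der}(L)$ as a subspace of $\mathrm{End}(L)$, equipped with the restriction of the standard commutator bracket, and to check the three axioms of a Lie algebra: closure, antisymmetry, and the Jacobi identity. Since the definition (13) gives the commutator bracket for arbitrary endomorphisms, the main content is closure; the other two axioms will be inherited from the associative algebra $(\mathrm{End}(L), \circ)$.

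First I would check that the bracket is well defined on the direct sum $\mathrm{Der}(L) = \bigoplus_{k \geq 0} \mathrm{Der}_{\alpha^{k}}(L)$. For homogeneous $D \in \mathrm{Der}_{\alpha^{k}}(L)$ and $D' \in \mathrm{Der}_{\alpha^{s}}(L)$, Lemma \ref{lemma3.2} yields $[D, D'] \in \mathrm{Der}_{\alpha^{k+s}}(L) \subseteq \mathrm{Der}(L)$. Extending bilinearly over finite sums of homogeneous elements then produces a well-defined bilinear map $[\cdot,\cdot] \colon \mathrm{Der}(L) \times \mathrm{Der}(L) \to \mathrm{Der}(L)$, which shows that $\mathrm{Der}(L)$ is closed under the bracket and gives it the structure of a $\mathbb{Z}_{\geq 0}$-graded subspace preserved by $[\cdot,\cdot]$.

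Next, antisymmetry $[D,D'] = -[D',D]$ is immediate from the definition (13) of the commutator. For the Jacobi identity, I would observe that on the associative algebra $(\mathrm{End}(L), \circ)$ the commutator $[f,g] = f \circ g - g \circ f$ always satisfies
\begin{equation*}
[f, [g,h]] + [g, [h,f]] + [h, [f,g]] = 0,
\end{equation*}
by a direct expansion in which the six associative products cancel in pairs. Since $\mathrm{Der}(L) \subseteq \mathrm{End}(L)$ as a vector subspace and is closed under the commutator by the previous step, the Jacobi identity for elements of $\mathrm{Der}(L)$ is inherited from the ambient associative algebra.

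There is no real obstacle here: once Lemma \ref{lemma3.2} is granted, the proposition is essentially the observation that any subspace of $\mathrm{End}(L)$ closed under the commutator is a Lie algebra. The only thing to be slightly careful about is the direct sum: one must note that an arbitrary element of $\mathrm{Der}(L)$ is a finite sum of homogeneous derivations, so bilinearity together with the homogeneous case of Lemma \ref{lemma3.2} suffices to guarantee closure.
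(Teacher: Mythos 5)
Your proposal is correct and follows the same route as the paper, which simply remarks that the proposition is immediate from Lemma \ref{lemma3.2}; you have merely spelled out the standard details (closure via the lemma extended bilinearly over the grading, with antisymmetry and the Jacobi identity inherited from the commutator on $\mathrm{End}(L)$). Nothing is missing.
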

\begin{re}
Similarly,  we can obtain a Lie algebra $\texttt{\Der}(L)=\bigoplus_{k}(\texttt{\Der}_{\alpha^{k}}(L)$,  where $k$ is any integer,  for a regular hom-Jordan-Lie algebra.
\end{re}

At the end of this section,  we consider the derivation extension of the multiplicative hom-Jordan-Lie algebra $(L, [\cdot, \cdot]_L, \alpha)$ and give an application of the $\alpha$-derivation $ \texttt{\Der}_{\alpha}(L)$.

For any  linear map $D:L \rightarrow L$,  consider the vector space $L \oplus RD$. Define a bilinear bracket operation$[\cdot, \cdot]_{L}$ on $L \oplus RD$ by
$$[u+mD, v+nD]_{D}=[u, v]_{L}+mD(v)-\delta{nD(u)}, $$
$$ [u, v]_{D}=[u, v]_{L}, [D, u]_{D}=-\delta[u, D]_{D}=D(u),   \qquad \forall u, v\in L.$$
Define a linear map $\alpha^{'}:L \oplus RD \rightarrow L \oplus RD$ by $\alpha^{'}(u+D)=\alpha(u)+D, $ i.e.\\
\begin{displaymath}
\mathbf{\alpha^{'}} =
\left( \begin{array}{cc}
\alpha & 0 \\
0 & 1  \\
\end{array}\right).
\end{displaymath}
\begin{thm}
With the above notations, $(L \oplus RD,  [\cdot, \cdot]_{D}, \alpha^{'})$ is a multiplicative hom-Jordan-Lie algebra if and only if D is an $\alpha$-derivation of the multiplicative hom-Jordan-Lie algebra $(L, [\cdot, \cdot]_L, \alpha)$, with $(1-\delta)D\circ D=0$.
\end{thm}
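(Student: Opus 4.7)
The plan is to verify, for $(L\oplus RD, [\cdot,\cdot]_D, \alpha')$, each of the three axioms of a multiplicative hom-Jordan-Lie algebra in turn, and to track which conditions on $D$ each axiom imposes. The Jordan symmetry $[x,y]_D = -\delta[y,x]_D$ is visibly built into the definition of the bracket: the $L$-component is inherited from $[\cdot,\cdot]_L$, which already satisfies it, and the mixed terms $[D,u]_D = D(u) = -\delta[u,D]_D$ are declared skew by the stipulation $[D,u]_D=-\delta[u,D]_D$; using $\delta^2=1$ one checks the general bracket $[u+mD,v+nD]_D$ satisfies Jordan symmetry. So no condition on $D$ arises here.

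Next, for multiplicativity $\alpha'([x,y]_D)=[\alpha'(x),\alpha'(y)]_D$, I will expand both sides on $x=u+mD$, $y=v+nD$. Using the multiplicativity of $\alpha$ on $L$ (so $\alpha([u,v]_L)=[\alpha(u),\alpha(v)]_L$) and $\alpha'(D)=D$, the identity reduces precisely to
$$m\alpha(D(v)) - \delta n\alpha(D(u)) = mD(\alpha(v)) - \delta nD(\alpha(u)),$$
valid for all $u,v,m,n$ iff $\alpha\circ D = D\circ\alpha$, i.e.\ $[D,\alpha]=0$. This is the first half of the $\alpha$-derivation condition.

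Finally, for the hom-Jordan-Lie identity I will use multilinearity to reduce to a case split on how many of $x,y,z$ lie in the $RD$ summand. When all three lie in $L$ the identity reduces to the one holding in $(L,[\cdot,\cdot]_L,\alpha)$. When exactly one, say $x=D$, lies in $RD$ and $y,z\in L$, I compute $[D,[y,z]_L]_D + [\alpha(y),[z,D]_D]_D + [\alpha(z),[D,y]_D]_D$ using $[D,u]_D=D(u)$ and $[u,D]_D=-\delta D(u)$, then apply Jordan symmetry on $L$; the equation collapses to
$$D([y,z]_L) = \delta\bigl([D(y),\alpha(z)]_L + [\alpha(y),D(z)]_L\bigr),$$
which is exactly the $\alpha$-derivation condition (the $k=1$ case of Definition~3.1). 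When two of $x,y,z$ are $D$, say $x=y=D$ and $z\in L$, using $[D,D]_D=0$ (which follows from Jordan symmetry and the bracket formula) the cyclic sum collapses to $(1-\delta)D\circ D(z)$, forcing $(1-\delta)D\circ D=0$. The case where all three equal $D$ is automatic since $[D,D]_D=0$. Conversely, assuming $[D,\alpha]=0$, the derivation identity, and $(1-\delta)D\circ D=0$, each case above produces zero, so the hom-Jacobi identity holds, completing the equivalence.

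I do not anticipate any real obstacle; the only thing demanding care is bookkeeping of the $\delta$'s (and of the asymmetric formula $[u+mD,v+nD]_D = [u,v]_L + mD(v) - \delta nD(u)$) in the two-$D$ case, where the factor $(1-\delta)$ emerges only after correctly pairing the two terms produced by the cyclic permutation.
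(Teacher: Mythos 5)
Your proposal is correct and follows essentially the same route as the paper: verify Jordan symmetry directly, reduce multiplicativity of $\alpha'$ to $D\circ\alpha=\alpha\circ D$, extract the $\alpha$-derivation identity from the hom-Jacobi identity applied to $(D,y,z)$ with $y,z\in L$, and observe that the remaining obstruction is the term $(1-\delta)D\circ D$. The only cosmetic difference is that you organize the Jacobi check as a case split by multilinearity, whereas the paper expands the general triple $(u+mD,\,v+nD,\,w+pD)$ at once and reads off $(1-\delta)\bigl(np\,D\circ D(u)+pm\,D\circ D(v)+mn\,D\circ D(w)\bigr)=0$; these are the same computation.
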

\begin{proof}First,  $[\cdot, \cdot]_{D}$ satisfies the condition (3), for any $u,  v,  w\in L$, we have\\
$$[u+mD, v+nD]_{D}=[u, v]_{L}+mD(v)-\delta nD(u), $$and
\begin{eqnarray*}
-\delta[v+nD, u+mD]_{D}&=&-\delta([v, u]_{L}+nD(u)-\delta mD(v))\\
&=&[u, v]_{L}+mD(v)-\delta nD(u)\\
&=&[u+mD, v+nD]_{D}.
\end{eqnarray*}
On the other hand,  we have
\begin{eqnarray*}
&&\alpha^{'}([u+mD, v+nD, ]_{D})=\alpha^{'}([u, v]_{L}+mD(v)-\delta nD(u))\\
&=&\alpha[u, v]_{L}+m\alpha\circ D(v))-\delta n\alpha\circ D(u),
\end{eqnarray*}
and
\begin{eqnarray*}
&&[\alpha^{'}(u+mD), \alpha^{'}(v+nD)]_{D}=[\alpha(u)+mD, \alpha(v)+nD]_{D}\\
&=&[\alpha(u), \alpha(v)]_{L}+mD\circ \alpha(v)-\delta nD\circ \alpha(u).
\end{eqnarray*}
Since $\alpha$ is an algebra morphism, $\alpha^{'}$ is an algebra morphism if and only if
$$D\circ \alpha=\alpha\circ D.$$
By a direct calculation,  we have
\begin{eqnarray*}
&&[\alpha^{'}(D), [u, v]_{D}]_{D}+[\alpha^{'}(u), [v, D]_{D}]_{D}+[\alpha^{'}(v), [D, u]_{D}]_{D}\\
&=&D[u, v]_{D}-\delta[\alpha(u), D(v)]_{D}+[\alpha(v), D(u)]_{D}\\
&=&D[u, v]_{D}-\delta[\alpha(u), D(v)]_{D}-\delta[D(u), \alpha(v)]_{D}.
\end{eqnarray*}
Therefore, if the hom-Jordan-Jacobi identity is satisfied, then the following condition holds
$$D[u, v]_{D}-\delta[\alpha(u), D(v)]_{D}-\delta[D(u), \alpha(v)]_{D}=0.$$
On the other hand, if $D$ is an $\alpha$-derivation of $(L, [\cdot, \cdot]_L, \alpha)$, and $(1-\delta)D\circ D=0.$
\begin{eqnarray*}
&&[\alpha^{'}(u+mD), [v+nD, w+pD]_{D}]_{D}+c.p(u+mD,v+nD,w+pD)\\
&=&[\alpha(u)+m D,[v,w]_{L}+n D(w)-\delta p D(v)]_{D}+c.p(u+mD,v+nD,w+pD)\\
&=&[\alpha(u), [v, w]_{L}]_{L}+n[\alpha(u),D(w)]_{L}-\delta p[\alpha(u),D(v)]_{L}\\
&+&m D[v,w]_{L}+mn D\circ D(w)-\delta pm D\circ D(v)+c.p(u+mD,v+nD,w+pD)\\
&=&(1-\delta)(np D\circ D(u)+pm D\circ D(v)+mn D\circ D(w))=0.
\end{eqnarray*}
Thus $(L \oplus RD,  [\cdot, \cdot]_{D}, \alpha^{'})$ is a multiplicative hom-Jordan-Lie algebra if and only if $D$ is an $\alpha$-derivation of $(L, [\cdot, \cdot]_L, \alpha)$, with $(1-\delta)D\circ D=0$.
\end{proof}
\section{Representations of hom-Jordan-Lie algebras}
In this section we study representations of  multiplicative hom-Jordan-Lie algebras and give corresponding  operations. We also prove that one can form semidirect product  multiplicative hom-Jordan-Lie algebras when given representations of  multiplicative hom-Jordan-Lie algebras. Let $A\in pl(V)$ be an arbitrary linear transformation from $V$ to $V$.
\begin{defn}
A representation of  multiplicative hom-Jordan-Lie algebra $(L, [\cdot, \cdot]_L, \alpha)$ on the  vector space V with respect to  $A\in pl(V)$ is a linear map $\rho_ A:L \rightarrow pl(V)$,  such that for any $ u, v\in L$,  the basic property of this map is that:
\begin{eqnarray}
&&\rho_{A}(\alpha(u))\circ A=A\circ\rho_{A}(u),\\
&&\rho_{A}([u, v]_L)\circ A=\rho_{A}(\alpha(u))\circ\rho_{A}(v)-\delta\rho_{A}(\alpha(v))\circ\rho_{A}(u).
\end{eqnarray}

The set of k-cochains on L with values in V,  which we denote by $C^{k}(L;V)$,  is the set of k-linear maps from $L \times \cdots \times L$ (k-times) to V:
\begin{eqnarray*}
C^{k}(L;V)\triangleq \{f:L \times \cdots \times L (k-times)\rightarrow V\; is\; a\; linear \; map\}.
\end{eqnarray*}

A k-hom-cochain on L with values in V is defined to be a k-cochain $f\in C^{k}(L;V)$ such that it is compatible with $\alpha$ and A in the sense that $A\circ f=f\circ \alpha$,  i.e.
$$A(f(u_{}, \cdots, u_{k+1}))=f(\alpha(u_{1}), \cdots, \alpha(u_{k+1})).$$
Denote by $C_{\alpha, A}^{k}(L;V)$ the set of  k-hom-cochains:
$$C_{\alpha, A}^{k}(L, V)\triangleq \{f\in C^{k}(L, V)|A\circ f=f\circ \alpha\}.$$

Next we define, the linear map $d_{\rho_{A}}^{k}:C_{\alpha, A}^{k}(L, V) \rightarrow C_{\alpha, A}^{k+1}(L, V)(k=1,2)$ as follows: we set
\begin{eqnarray*}
d_{\rho_{A}}^{1}f(u_{1},u_{2})=\rho_{A}(\alpha(u_{1}))f(u_{2})-\delta\rho_{A}(\alpha(u_{2}))f(u_{1})-\delta f([u_{1}, u_{2}]_{L}),
\end{eqnarray*}
\begin{eqnarray*}
d_{\rho_{A}}^{2}f(u_{1},u_{2},u_{3})
&=&\rho_{A}(\alpha^{2}(u_{1}))f(u_{2},u_{3})-\delta\rho_{A}(\alpha^{2}(u_{2}))f(u_{1},u_{3})+\rho_{A}(\alpha^{2}(u_{1}))f(u_{1},u_{2})\\
&&-f([u_{1}, u_{2}]_{L},\alpha(u_{3}))+\delta f([u_{1}, u_{3}]_{L},\alpha(u_{2}))-\delta f([u_{2}, u_{3}]_{L},\alpha(u_{1})).
\end{eqnarray*}
\end{defn}
\begin{lem}With the above notations,  for any $ f\in C_{\alpha, A}^{k}(L;V), $ we have
$$(d_{\rho_{A}}^{k}\circ f)\circ \alpha=A \circ d_{\rho_{A}}^{k}f.$$
Thus we obtain a well-defined map
$$d_{\rho_{A}}^{k}:C_{\alpha, A}^{k}(L;V) \rightarrow C_{\alpha, A}^{k+1}(L;V)$$with k=1,2.
\end{lem}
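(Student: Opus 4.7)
The plan is to verify directly, for each $k \in \{1,2\}$, that the cochain $d_{\rho_A}^k f$ is compatible with $\alpha$ and $A$, i.e.\ $A \circ (d_{\rho_A}^k f) = (d_{\rho_A}^k f)\circ \alpha$ (with $\alpha$ acting diagonally on all $k+1$ arguments). This is precisely the condition for $d_{\rho_A}^k f \in C_{\alpha,A}^{k+1}(L;V)$, and hence the promised well-definedness. The three ingredients I would plug in repeatedly are: (i) the cochain condition $A\circ f = f\circ \alpha$ on the input $f$; (ii) the representation compatibility $\rho_A(\alpha(u))\circ A = A\circ \rho_A(u)$ from equation (10); and (iii) the multiplicativity $\alpha([u,v]_L)=[\alpha(u),\alpha(v)]_L$ of $\alpha$.

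For $k=1$, I would evaluate $d_{\rho_A}^1 f(\alpha(u_1),\alpha(u_2))$ summand by summand. In the term $\rho_A(\alpha^2(u_1))f(\alpha(u_2))$, using (i) I replace $f(\alpha(u_2))$ by $Af(u_2)$, and then using (ii) (applied with $\alpha(u_1)$ in the role of $u$) I pull the $A$ past $\rho_A$, converting the term into $A\,\rho_A(\alpha(u_1))f(u_2)$. The symmetric term in $u_2$ is handled identically. In the bracket term $f([\alpha(u_1),\alpha(u_2)]_L)$, I use (iii) to rewrite the bracket as $\alpha([u_1,u_2]_L)$ and then (i) to factor out $A$, producing $Af([u_1,u_2]_L)$. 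Summing recovers $A\circ d_{\rho_A}^1 f(u_1,u_2)$.

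For $k=2$, the same three moves apply term by term. The first three summands $\rho_A(\alpha^2(u_i))f(u_j,u_k)$ require the bilinear version of (i), namely $A f(u_j,u_k)=f(\alpha(u_j),\alpha(u_k))$, so I must track the $\alpha$ on both slots of $f$ when shifting inputs; then (ii) extracts $A$ past $\rho_A$. The last three summands of the form $f([u_i,u_j]_L,\alpha(u_k))$ reduce via (iii) and the bilinear form of (i), since $[\alpha(u_i),\alpha(u_j)]_L = \alpha([u_i,u_j]_L)$ and $\alpha(\alpha(u_k))$ appears in the second slot. The main obstacle here is purely bookkeeping: one must keep the cyclic pattern of signs $\delta$ straight and make sure the exponents on $\alpha$ match up when invoking (ii), so that one uses $\rho_A(\alpha^2(u))\circ A = A\circ \rho_A(\alpha(u))$ rather than a mis-shifted version. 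No conceptual difficulty arises beyond this; all three identities are built into the hypotheses, and collecting the terms yields $A\circ d_{\rho_A}^2 f(u_1,u_2,u_3)$.
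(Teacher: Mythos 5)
Your proposal is correct, and the direct term-by-term verification using the cochain condition $A\circ f=f\circ\alpha$, the compatibility $\rho_A(\alpha(u))\circ A=A\circ\rho_A(u)$ (applied with $\alpha(u)$ or $\alpha^2(u)$ in place of $u$ as you note), and the multiplicativity of $\alpha$ is exactly the intended argument; the paper in fact states this lemma without supplying any proof, so your computation fills that omission in the standard way.
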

\begin{prop}
 With the above notations, we have $d_{\rho_{A}}^{2}\circ d_{\rho_{A}}^{1}=0$.
\end{prop}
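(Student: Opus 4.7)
The plan is to verify $d_{\rho_A}^{2}\circ d_{\rho_A}^{1}=0$ by direct computation: fix $f\in C^{1}_{\alpha,A}(L;V)$, write $g=d_{\rho_A}^{1}f\in C^{2}_{\alpha,A}(L;V)$, and evaluate $d_{\rho_A}^{2}g(u_1,u_2,u_3)$ by substituting the definition of $d_{\rho_A}^{1}$ into each of the six summands of $d_{\rho_A}^{2}$. Since each summand of $d_{\rho_A}^{2}g$ of the form $\rho_A(\alpha^2(u_i))g(u_j,u_k)$ produces three terms (two $\rho_A\rho_A f$ pieces and one $\rho_A f\circ[\cdot,\cdot]_L$ piece), and each summand of the form $g([u_i,u_j]_L,\alpha(u_k))$ produces three terms (two $\rho_A\circ f$ pieces involving $\rho_A(\alpha([u_i,u_j]_L))$ and one $f([[u_i,u_j]_L,\alpha(u_k)]_L)$ piece), the full expansion yields eighteen terms that must be organized and cancelled.

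I would group these eighteen terms into three families and eliminate each family separately. The first family consists of terms quadratic in $\rho_A$, of the form $\rho_A(\alpha^2(u_i))\rho_A(\alpha(u_j))f(u_k)$; these will cancel using the representation identity (14), namely $\rho_A([u,v]_L)\circ A=\rho_A(\alpha(u))\circ\rho_A(v)-\delta\rho_A(\alpha(v))\circ\rho_A(u)$, to rewrite each such pair as $\rho_A([\alpha(u_i),\alpha(u_j)]_L)\circ A$ applied to $f(u_k)$, and then using multiplicativity $\alpha([u_i,u_j]_L)=[\alpha(u_i),\alpha(u_j)]_L$ together with the cochain compatibility $A\circ f=f\circ\alpha$ to match them against the terms of the second family, $\rho_A(\alpha([u_i,u_j]_L))\circ f(\alpha(u_k))$. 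The third family, of the form $f([[u_i,u_j]_L,\alpha(u_k)]_L)$ plus its cyclic permutations, will collapse to zero by applying $f$ to the hom-Jordan-Lie Jacobi identity (2), after first pushing the outer $\alpha$ inside using multiplicativity.

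Throughout the computation I would repeatedly use the $\delta$-skew-symmetry $[x,y]_L=-\delta[y,x]_L$ together with $\delta^2=1$ to normalize every term so that inner brackets always appear in the order $[u_i,u_j]_L$ with $i<j$, and outer arguments of $f$ and $\rho_A$ always appear in the standard cyclic order; this makes the matching across the three families transparent. The compatibility condition $\rho_A(\alpha(u))\circ A=A\circ\rho_A(u)$ (13), iterated, will let me move $A$ past $\rho_A$ whenever the cochain condition $A\circ f=f\circ\alpha$ forces a shift between $f$ and $f\circ\alpha$.

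The main obstacle is purely combinatorial: keeping track of the signs produced by the factors of $\delta$ and the cyclic reorderings, and correctly matching each term in the quadratic-$\rho_A$ family with its partner via identity (14). No deep idea is required beyond (13), (14), the Jacobi identity (2), multiplicativity, and the cochain compatibility; but the bookkeeping for eighteen signed terms in three variables is delicate, and I would organize it in a table indexed by (which $u_i$ is inside $\rho_A(\alpha^2(\cdot))$, which pair $\{u_j,u_k\}$ appears in the inner argument, and which subterm of $d_{\rho_A}^{1}$ was used) to make sure nothing is dropped.
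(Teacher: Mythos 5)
Your plan is correct and is essentially the paper's own argument: a direct expansion of $d_{\rho_{A}}^{2}\circ d_{\rho_{A}}^{1}f(u_1,u_2,u_3)$, with the quadratic $\rho_A$-terms cancelled against the $\rho_{A}(\alpha([u_i,u_j]_L))f(\alpha(u_k))$ terms via the representation identity together with multiplicativity of $\alpha$ and the cochain condition $A\circ f=f\circ\alpha$, and the $f([[u_i,u_j]_L,\alpha(u_k)]_L)$ terms killed by the hom-Jordan-Jacobi identity. The only caveat is bookkeeping: the three families you list cover twelve of the eighteen terms, and the remaining six terms of the form $\rho_{A}(\alpha^{2}(u_i))f([u_j,u_k]_L)$ cancel in pairs directly, which your tabulation would catch.
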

\begin{proof}
By straightforward computations, we have
\begin{eqnarray*}
&&d_{\rho_{A}}^{2}\circ d_{\rho_{A}}^{1}f(u_{1},u_{2} , u_{3})\\
&=&\sum_{i=1}^{3}(-\delta)^{i+1}\rho_{A}(\alpha^{2}(u_{i}))d_{\rho_{A_{}}}^{1}f(u_{1}, \cdots, \hat{{u_{i}}} \cdots, u_{3})\\
&+&\delta^{3}\sum_{i<j}(-\delta)^{i+j}d_{\rho_{A}}^{1}f([u_{i}, u_{j}]_{L}, \alpha(u_{1}), \cdots, \hat{\alpha(u_{i})}, \cdots, \hat{\alpha(u_{i})}, \cdots, \alpha(u_{3}))\\
&=&\rho_{A}(\alpha^{2}(u_{1}))d_{\rho_{A}}^{1}f(u_{2}, u_{3})-\delta\rho_{A}(\alpha^{2}(u_{2}))d_{\rho_{A}}^{1}f(u_{1}, u_{3})\\
&+&\rho_{A}(\alpha^{2}(u_{3}))d_{\rho_{A}}^{1}f(u_{1}, u_{2})-d_{\rho_{A}}^{1}f([u_{1},u_{2}], \alpha(u_{3}))\\
&+&\delta d_{\rho_{A}}^{1}f([u_{1},u_{3}], \alpha(u_{2}))-d_{\rho_{A}}^{1}f([u_{2},u_{3}], \alpha(u_{1}))\\
&=&\rho_{A}(\alpha^{2}(u_{1}))(\rho_{A}(\alpha(u_{2}))f(u_{3})-\delta\rho_{A}(\alpha(u_{3}))f(u_{2})-\delta f([u_{2},u_{3}]))\\
&-&\delta\rho_{A}(\alpha^{2}(u_{2}))(\rho_{A}(\alpha(u_{1})f(u_{3}))-\delta\rho_{A}(\alpha(u_{3}))f(u_{1})-\delta f([u_{1},u_{3}]))\\
&+&\rho_{A}(\alpha^{2}(u_{3}))(\rho_{A}(\alpha(u_{1})f(u_{2}))-\delta\rho_{A}(\alpha(u_{2}))f(u_{1})-\delta f([u_{1},u_{2}]))\\
&-&(\rho_{A}(\alpha([u_{1},u_{2}]))f(\alpha(u_{3}))-\delta\rho_{A}(\alpha^{2}(u_{3}))f([u_{1},u_{2}])-\delta f([[u_{1},u_{2}],\alpha(u_{3})]))\\
&+&\delta(\rho_{A}(\alpha([u_{1},u_{3}]))f(\alpha(u_{2}))-\delta\rho_{A}(\alpha^{2}(u_{2}))f([u_{1},u_{3}])-\delta f([[u_{1},u_{3}],\alpha(u_{2})]))\\
&-&(\rho_{A}(\alpha([u_{2},u_{3}]))f(\alpha(u_{1}))-\delta\rho_{A}(\alpha^{2}(u_{1}))f([u_{2},u_{3}])-\delta f([[u_{2},u_{3}],\alpha(u_{1})]))\\
&&                             \rho_{A}(\alpha([u_{2},u_{3}]f(\alpha(u_{1}))=\rho_{A}(\alpha([u_{2},u_{3}]))A\circ f(u_{1})\\
&=&\rho_{A}(\alpha^{2}(u_{2}))\rho_{A}(\alpha(u_{3}))f(u_{1})-\delta\rho_{A}(\alpha^{2}(u_{3}))\rho_{A}(\alpha(u_{2}))f(u_{1}).
\end{eqnarray*}
Then $d_{\rho_{A}}^{2}\circ d_{\rho_{A}}^{1}f(u_{1},u_{2} , u_{3})=0$.
\end{proof}
Associated to the representation $\rho_{A}$,  we obtain the complex $(C_{\alpha, A}^{k}(L;V), d_{\rho_{A}})$. Denote the set of closed  $k$-hom-cochains by $Z^{k}(L;\rho_{A})$ and the set of exact $k$-hom-cochains by $B^{k}(L, \rho_{A})$, $k=1,2$.
\begin{prop}
Given a representation $\rho _{A}$ of the multiplicative hom-Jordan-Lie algebra $(L, [\cdot, \cdot]_L, \alpha)$ on the vector space V with respect to $A\in pl(V) $. Define a bilinear bracket operation $[\cdot, \cdot]_{\rho_{A}}:(L \oplus V)\times (L \oplus V)\rightarrow L \oplus V$ by
\begin{equation}
[u+X, v+Y]_{\rho_{A}}=[u, v]_{L}+\delta\rho_{A}(u)(Y)-\rho_{A}(v)(X).
\end{equation}
Define $\alpha+A:L\oplus V \rightarrow L \oplus V$ by
$(\alpha+A)(u+Y)=\alpha(u)+AX$.\\
Then $(L\oplus V, [\cdot, \cdot]_{\rho_{A}}, \alpha+A)$ is a multiplicative hom-Jordan-Lie algebra,  which we call the semidirect product of the multiplicative hom-Jordan-Lie algebra $(L, [\cdot, \cdot]_L, \alpha)$ and V.
\end{prop}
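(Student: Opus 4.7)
There are three things to verify: (i) the $\delta$-skew-symmetry of $[\cdot,\cdot]_{\rho_A}$, (ii) the fact that $\alpha+A$ is an algebra morphism with respect to $[\cdot,\cdot]_{\rho_A}$, and (iii) the hom-Jordan-Jacobi identity on $L\oplus V$. I would organize the proof around these three items.

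For (i), expanding $-\delta[v+Y,u+X]_{\rho_A}$ and using $\delta^2=1$ together with the $\delta$-skew-symmetry of $[\cdot,\cdot]_L$ recovers $[u+X,v+Y]_{\rho_A}$ termwise. For (ii), apply $\alpha+A$ to the defining formula (18) and compare with $[(\alpha+A)(u+X),(\alpha+A)(v+Y)]_{\rho_A}$; the $L$-component matches because $\alpha$ is multiplicative on $(L,[\cdot,\cdot]_L,\alpha)$, while the two $V$-components match precisely by the intertwining axiom (16), $\rho_A(\alpha(u))\circ A=A\circ\rho_A(u)$. Both of these steps are short.

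The main work is (iii). I would expand
\[
[(\alpha+A)(u_1+X_1),[u_2+X_2,u_3+X_3]_{\rho_A}]_{\rho_A}
\]
using (18) twice, and then split the result into its $L$-component and its $V$-component. Summing over the cyclic permutations of $(u_1+X_1,u_2+X_2,u_3+X_3)$, the $L$-components give exactly the hom-Jordan-Jacobi sum for $(L,[\cdot,\cdot]_L,\alpha)$ and therefore vanish. For the $V$-component, each summand produces three types of terms: two ``double $\rho_A$'' terms of the form $\rho_A(\alpha(u_i))\rho_A(u_j)(X_k)$ (with signs involving $\delta$), and one term $\rho_A([u_j,u_k]_L)(AX_i)$. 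The latter can be rewritten, via the representation axiom (17), as a combination of double-$\rho_A$ terms.

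The main obstacle is bookkeeping: after this rewriting there are twelve terms in the $V$-component, and I expect them to cancel in six pairs using $\delta^2=1$ and the $\delta$-skew-symmetry of the bracket (so that, for instance, $-\rho_A(\alpha(u_2))\rho_A(u_3)(X_1)$ from one cyclic term cancels $+\rho_A(\alpha(u_2))\rho_A(u_3)(X_1)$ from another). Once this cancellation is written out, the hom-Jordan-Jacobi identity holds on $L\oplus V$, and combined with (i) and (ii) this gives that $(L\oplus V,[\cdot,\cdot]_{\rho_A},\alpha+A)$ is a multiplicative hom-Jordan-Lie algebra. No new ideas beyond the two representation axioms (16), (17) and the Jacobi identity in $L$ are needed.
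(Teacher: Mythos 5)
Your proposal is correct and follows essentially the same route as the paper's own proof: Jordan symmetry by direct expansion, multiplicativity of $\alpha+A$ via the intertwining axiom $\rho_{A}(\alpha(u))\circ A=A\circ\rho_{A}(u)$, and the hom-Jordan-Jacobi identity by splitting into $L$- and $V$-components, where the $L$-part vanishes by the Jacobi identity of $L$ and the $V$-part cancels after rewriting each $\rho_{A}([u_j,u_k]_L)(AX_i)$ term with the second representation axiom. Nothing further is needed.
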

\begin{proof}
First we show that $[\cdot, \cdot]_{\rho_{A}}$ satisfies Jordan symmetry,
\begin{eqnarray*}
&&-\delta[v+Y, u+X]_{\rho_{A}}\\
&=&-\delta([v, u]_{L}+\delta\rho_{A}(v)X-\rho_{A}(u)Y)\\
&=&[u, v]_{L}+\delta\rho_{A}(u)Y-\rho_{A}(v)X\\
&=&[u+X, v+Y]_{\rho_{A}}.
\end{eqnarray*}
Next we show that $\alpha+A$ is an algebra morphism. On the one hand,  we have
\begin{eqnarray*}
(\alpha+A)[u+X, v+Y]_{\rho_{A}}&=&(\alpha+A)([u, v]_{L}+\delta\rho_{A}(u)(Y)-\rho_{A}(v)(X))\\
&=&\alpha([u, v]_{L})+\delta A\circ\rho_{A}(u)(Y)-A\circ \rho_{A}(v)(X).
\end{eqnarray*}
On the other hand,  we have
\begin{eqnarray*}
&&[(\alpha+A)(u+X), (\alpha+A)(v+Y)]_{\rho_{A}}\\
&=&[\alpha(u)+AX, \alpha(v)+AY]_{\rho_{A}}\\
&=&[\alpha(u), \alpha(v)]_{L}+\delta\rho_{A}(\alpha(u))(AY)-\rho_{A}(\alpha(v))(AX).
\end{eqnarray*}
Since $\alpha$ is an algebra morphism,  $\rho_A$ and $A$ satisfy $(15)$,  it follows that $\alpha+A$ is an algebra morphism with respect to the bracket to $[\cdot, \cdot]_{\rho_{A}}$. It is not hard to deduce that
\begin{eqnarray*}
&&[(\alpha+A)(u+X), [v+Y, w+Z]_{\rho_{A}}]_{\rho_{A}}+c.p.(u+X, v+Y, w+Z)\\
&=&[\alpha(u), [v, w]_{L}]_{L}+c.p.(u, v, w)\\
&&+\rho_{A}(\alpha(u))\circ\rho_{A}(v)(Z)-\delta\rho_{A}(\alpha(u))\circ\rho_{A}(w)(Y)-\rho_{A}([v,w])\circ(AX)\\
&&+\rho_{A}(\alpha(v))\circ\rho_{A}(w)(X)-\delta\rho_{A}(\alpha(v))\circ\rho_{A}(u)(Z)-\rho_{A}([w,u])\circ(AY)\\
&&+\rho_{A}(\alpha(w))\circ\rho_{A}(u)(Y)-\delta\rho_{A}(\alpha(w))\circ\rho_{A}(v)(X)-\rho_{A}([u,v])\circ(AZ),\\
&&      \rho_{A}([v,w])\circ(AX)=\rho_{A}(\alpha(v))\circ\rho_{A}(w)(X)-\delta\rho_{A}(\alpha(w))\circ\rho_{A}(v)(X).
\end{eqnarray*}
By (16),  the hom-Jordan-Jacobi identity is satisfied. Thus,  $(L\oplus V, [\cdot, \cdot]_{\rho_{A}}, \alpha+A)$ is a multiplicative hom-Jordan-Lie algebra.
\end{proof}
\section{The trivial representation of hom-Jordan-Lie algebras}
In this section,  we study the trivial representation of multiplicative hom-Jordan-Lie algebras. As an application, we show that the central extension of a  multiplicative hom-Jordan-Lie algebra $(L, [\cdot, \cdot]_L, \alpha)$ is controlled by the second cohomology of $L$ with coeffcients in the trivial representation.

Now let $V=R$,  Then we have $gl(V)=R$. Any $A\in gl(V)$ is exactly a real number,  which we use the notation $r$. Let $\rho:L\rightarrow gl(v)=R$ be the zero map. Obviously,  $\rho$ is a representation of the multiplicative hom-Jordan-Lie algebra $(L, [\cdot, \cdot]_L, \alpha)$ with respect to any $r\in R$. We will always assume that $r=1$. We call this representation the \textbf{trivial representation} of the  multiplicative hom-Jordan-Lie algebra $(L, [\cdot, \cdot]_L, \alpha)$.

Associated to the trivial representation,  the set of $k$-cochains on $L$,  which we denote by $C^{k}(V)$,  is the set of $k$-linear maps from $V \times \cdots \times V$ to $R$. The set of $k$-cochains is given by
$$C_{\alpha, A}^{k}(V)=\{f\in C^{k}(V)|f\circ\alpha=f\}.$$
The corresponding  operator $d_T:C_{\alpha, A}^{k}(V)\rightarrow C_{\alpha, A}^{k+1}(V)(k=1,2)$ is given by
\begin{equation}
d_{T}^{1}f(u_{1},u_{2})=-\delta f([u_{1}, u_{2}]),
\end{equation}
\begin{equation}
d_{T}^{2}f(u_{1},u_{2},u_{3})=-f([u_{1}, u_{2}],\alpha(u_{3}))+\delta f([u_{1}, u_{3}],\alpha(u_{2}))-f([u_{2},u_{3}],\alpha(u_{1})).
\end{equation}
Denote $Z_{\alpha}^{k}(V)$ and $B_{\alpha}^{k}(V)(k=1,2)$ similarly.

In the following we consider central extensions of the multiplicative hom-Jordan-Lie algebra $(L, [\cdot, \cdot]_L, \alpha)$.
Obviously, $(R, 0, 1)$ is an abelian multiplicative hom-Jordan-Lie algebra with the trivial bracket and the identity morphism. Let $\theta\in  C_{\alpha}^{2}(V)$ and is a Jordan-symmetric($\theta(u, v)=-\delta\theta(v, u)$) map, we consider the direct sum $\eta=L\oplus R$ with the following bracket
\begin{equation}
[u+s, v+t]_{\theta}=[u, v]_{L}+\theta(u, v), \qquad \forall u, v \in L, s, t \in R.
\end{equation}
Define $\alpha_\eta:\eta \rightarrow \eta$ by $\alpha_{\eta}(u+s)=\alpha(u)+s$,  i.e.
\begin{displaymath}
\mathbf{\alpha_{\eta}} \triangleq
\left( \begin{array}{cc}
\alpha & 0 \\
0 & 1  \\
\end{array} \right).
\end{displaymath}
\begin{thm}
The triple $(\eta, [\cdot, \cdot]_{\theta}, \alpha_{\eta})$ is a multiplicative hom-Jordan-Lie algebra if and only if $\theta\in C_{\alpha}^{2}(V)$ satisfies $$d_{T}\theta=0.$$

We call the multiplicative hom-Jordan-Lie algebra  $(\eta, [\cdot, \cdot]_{\theta}, \alpha_{\eta})$ the central extension of $(L, [\cdot, \cdot]_{L}, \alpha)$ by the abelian hom-Jordan-Lie algebra $(R, 0, 1)$.
\end{thm}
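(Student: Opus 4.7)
The plan is to verify the three defining axioms of a multiplicative hom-Jordan-Lie algebra for $(\eta,[\cdot,\cdot]_\theta,\alpha_\eta)$ directly. Under the standing hypotheses that $\theta$ is Jordan-symmetric and lies in $C_\alpha^2(V)$, the Jordan-symmetry of the new bracket and the multiplicativity of $\alpha_\eta$ will drop out automatically, and the only nontrivial remaining condition, the hom-Jordan-Jacobi identity on $\eta$, will be shown to be exactly equivalent to $d_T\theta=0$.

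First I would check Jordan-symmetry of $[\cdot,\cdot]_\theta$: expanding both sides of $[u+s,v+t]_\theta=-\delta[v+t,u+s]_\theta$ and using $[u,v]_L=-\delta[v,u]_L$, the identity collapses to $\theta(u,v)=-\delta\theta(v,u)$, which is given. Next I would verify that $\alpha_\eta$ is multiplicative: comparing $\alpha_\eta([u+s,v+t]_\theta)=\alpha([u,v]_L)+\theta(u,v)$ with $[\alpha_\eta(u+s),\alpha_\eta(v+t)]_\theta=[\alpha(u),\alpha(v)]_L+\theta(\alpha(u),\alpha(v))$, the $L$-components agree because $\alpha$ is already multiplicative on $L$, and the scalar components agree exactly when $\theta(\alpha(u),\alpha(v))=\theta(u,v)$, i.e.\ exactly when $\theta\in C_\alpha^2(V)$.

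The heart of the proof is the hom-Jordan-Jacobi identity on $\eta$. Expanding
\[
[\alpha_\eta(u+s),[v+t,w+r]_\theta]_\theta + c.p.(u+s,v+t,w+r),
\]
the $L$-components sum to zero by the hom-Jordan-Jacobi identity already holding in $L$, while the $R$-components collect into $\theta(\alpha(u),[v,w]_L)+\theta(\alpha(v),[w,u]_L)+\theta(\alpha(w),[u,v]_L)$. Applying the Jordan-symmetry of $\theta$ to move $\alpha(\cdot)$ into the second slot and then the relation $[w,u]_L=-\delta[u,w]_L$ to unify signs, this expression rearranges (up to an overall factor $\pm\delta$) into the formula
\[
d_T^2\theta(u,v,w) = -\theta([u,v]_L,\alpha(w)) + \delta\theta([u,w]_L,\alpha(v)) - \theta([v,w]_L,\alpha(u)).
\]
Consequently, the hom-Jacobi identity on $\eta$ holds for all $u,v,w\in L$ if and only if $d_T^2\theta\equiv 0$, giving the desired equivalence.

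The main obstacle will be the careful sign bookkeeping when $\delta=-1$: both the Jordan-symmetry of $\theta$ and the antisymmetry-up-to-$\delta$ of $[\cdot,\cdot]_L$ introduce factors of $-\delta$, and these need to combine consistently so that the three terms of the cyclic sum line up with the three terms of $d_T^2\theta$ with correct signs. Once the convention of always placing the bracket in the first slot of $\theta$ is fixed, the identification is immediate and the proposition follows.
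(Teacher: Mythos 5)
Your proposal is correct and follows essentially the same route as the paper: verify Jordan-symmetry of $[\cdot,\cdot]_\theta$ from that of $\theta$, deduce multiplicativity of $\alpha_\eta$ from $\theta\circ\alpha=\theta$, and reduce the hom-Jordan-Jacobi identity on $\eta$ to the cyclic condition $\theta(\alpha(u),[v,w]_L)+c.p.=0$, which is then identified with $d_T\theta=0$ up to sign via the Jordan-symmetry of $\theta$. No substantive difference from the paper's argument.
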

\begin{proof}
First we show that $[\cdot, \cdot]_{\theta}$ satisfies (3),
\begin{eqnarray*}
&&-\delta[v+t, u+s]_{\theta}=-\delta([v, u]_{L}+\theta(v, u))\\
&=&[u, v]_{L}+\theta(u, v)=[u+s, v+t]_{\theta}.
\end{eqnarray*}

The fact that $\alpha_{\eta}$ is an algebra morphism with respect to the bracket $[\cdot, \cdot]_{\theta}$ follows from the fact that $\theta\circ\alpha=\theta$. More precisely,  we have
$$\alpha_{\eta}[u+s, v+t]_{\theta}=\alpha[u, v]_{L}+\theta(u, v).$$
On the other hand,  we have
$$[\alpha_{\eta}(u+s), \alpha_{\eta}(v+t)]_{\theta}=[\alpha(u)+s, \alpha(v)+t]_{\theta}=[\alpha(u),
\alpha(v)]_{L}+\theta(\alpha(u), \alpha(v)).$$

Since $\alpha$ is an algebra morphism and $\theta(\alpha(u), \alpha(v))=\theta(u, v)$,  we deduce that  $\alpha_{\eta}$ is an algebra morphism.

By direct computations,  we have
\begin{eqnarray*}
&&[\alpha_{\eta}(u+s), [v+t, w+m]_{\theta}]_{\theta}+c.p.(u+s, v+t, w+m)\\
&=&[\alpha(u)+s, [v, w]_{L}+\theta(v,w)]_{\theta}+c.p.(u+s, v+t, w+m)\\
&=&[\alpha(u), [v, w]_{L}]_{L}+\theta(\alpha(u), [v, w]_{L})+c.p.(u, v, w).
\end{eqnarray*}
By the hom-Jacobi identity of $L$,  $[\cdot, \cdot]_{\theta}$ satisfies the hom-Jacobi identity if and only if
$$\theta(\alpha(u), [v, w]_{L})+\theta(\alpha(v), [w, u]_{L})+\theta(\alpha(w), [u, v]_{L})=0. $$
Which exactly means that $d_{T}\theta=0.$ In fact,
\begin{eqnarray*}
&&d_{T}\theta(u, v, w)\\
&=&\delta^{3}(-\delta\theta([u, v]_{L}, \alpha(w))+\theta([u, w]_{L}, \alpha(v))-\delta\theta([v, w]_{L}, \alpha(u)))\\
&=&-(\theta([u, v]_{L}, \alpha(w))+\theta([w, u]_{L}, \alpha(v))+\theta([v, w]_{L}, \alpha(u)))\\
&=&\delta([\alpha_{\eta}(u+s), [v+t, w+m]_{\theta}]_{\theta}+c.p.)\\
&=&0.
\end{eqnarray*}

Then the triple $(\eta, [\cdot, \cdot]_{\theta}, \alpha_{\eta})$ is a multiplicative hom-Jordan-Lie algebra if and only if $\theta\in C_{\alpha}^{2}(V)$ satisfies $d_{T}\theta=0$.
\end{proof}
\begin{prop}
For $\theta_{1},  \theta_{2}\in Z^{2}(V)$,  if $\delta(\theta_{1}-\theta_{2})$ is exact,  the corresponding two central extensions $(\eta, [\cdot, \cdot]_{\theta_{1}}, \alpha_{\eta})$ and $(\eta, [\cdot, \cdot]_{\theta_{2}}, \alpha_{\eta})$ are isomorphic.
\end{prop}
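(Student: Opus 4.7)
\qquad The plan is to build an explicit isomorphism from the $1$-cochain witnessing exactness. Unwinding the hypothesis, $\delta(\theta_1-\theta_2)\in B^2_\alpha(V)$ means there exists $f\in C^1_{\alpha,A}(V)$ with $\delta(\theta_1-\theta_2)=d^1_T f$. Since $d^1_T f(u,v)=-\delta f([u,v]_L)$, multiplying through by $\delta$ (using $\delta^2=1$) yields the key identity
$$\theta_2(u,v)-\theta_1(u,v)=f([u,v]_L),\qquad\forall\,u,v\in L.$$
This, together with $f\circ\alpha=f$ built into the definition of $C^1_{\alpha,A}(V)$, is the only algebraic content of the hypothesis that the proof will use.

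\qquad I then define $\phi:\eta\to\eta$ by $\phi(u+s)=u+\bigl(s+f(u)\bigr)$. This is a linear bijection with inverse $u+s\mapsto u+\bigl(s-f(u)\bigr)$. The relation $\phi\circ\alpha_\eta=\alpha_\eta\circ\phi$ follows immediately from $f\circ\alpha=f$: both sides send $u+s$ to $\alpha(u)+s+f(u)$.

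\qquad The core check is that $\phi$ intertwines the two brackets, i.e. $\phi\bigl([u+s,v+t]_{\theta_1}\bigr)=[\phi(u+s),\phi(v+t)]_{\theta_2}$. Expanding both sides via the formula $[u+s,v+t]_\theta=[u,v]_L+\theta(u,v)$ gives
$$\phi\bigl([u+s,v+t]_{\theta_1}\bigr)=[u,v]_L+\theta_1(u,v)+f([u,v]_L),$$
$$[\phi(u+s),\phi(v+t)]_{\theta_2}=[u,v]_L+\theta_2(u,v),$$
and equality of the two right-hand sides is precisely the key identity derived above. Hence $\phi$ is the desired isomorphism of multiplicative hom-Jordan-Lie algebras.

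\qquad There is no real obstacle here; the argument is essentially bookkeeping. The only subtle point is sign tracking: the hypothesis is phrased as ``$\delta(\theta_1-\theta_2)$ is exact'' rather than ``$\theta_1-\theta_2$ is exact'' precisely to cancel the $-\delta$ in the coboundary formula for $d^1_T$, so that the perturbation term $f([u,v]_L)$ appears with the correct sign to carry $\theta_1$ to $\theta_2$ under $\phi$.
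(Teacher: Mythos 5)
Your proposal is correct and is essentially the paper's own argument: the same witness identity $\theta_2(u,v)-\theta_1(u,v)=f([u,v]_L)$, the same shear map $u+s\mapsto u+(s+f(u))$, and the same three verifications (bijectivity, compatibility with $\alpha_\eta$ via $f\circ\alpha=f$, and the bracket intertwining). Your closing remark about why the hypothesis carries the factor $\delta$ is a clean account of the sign bookkeeping that the paper leaves implicit.
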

\begin{proof}
Assume that  $\theta_{1}-\theta_{2}=\delta d_{T}f$,  $f\in C_{\alpha}^{1}(V)$. Thus we have
$$\theta_{1}(u, v)-\theta_{2}(u, v)=\delta d_{T}f(u, v)=-f([u, v]_{T}).$$
Define $f_{\eta}:\eta\rightarrow \eta$ by
$$f_{\eta}(u, s)=(u, s+f(u)).$$

If $(u, s+f(u))=0$, then $u=0$, $s=0$. Thus $f_{\eta}(0, 0)=0$,  so $f_{\eta}$ is injective.
If $(u, v)$ is the image,  then $f(u)$ is certain element. so $s=v-f(u)$ is uniquely determined,  Thus arbitrary items have the original item.
so $f_{\eta}$ is isomorphic.
It is straightforward to see that $f_{\eta}\circ\alpha_{\eta}=\alpha_{\eta}\circ f_{\eta}$.
Obviously,  $f_{\eta}$ is an isomorphism of vector spaces. we also have
\begin{eqnarray*}
&&f_{\eta}[(u, s), (v, t)]_{\theta_{1}}=f_{\eta}([u, v]_{L}, \theta_{1}(u, v))\\
&=&([u, v]_{L}, \theta_{1}(u, v)+f([u, v]_{L}))=([u, v]_{L}, \theta_{2}(u, v))\\
&=&[f_{\eta}(u, s), f_{\eta}(v, t)]_{\theta_{2}}.
\end{eqnarray*}
Therefore,  $f_{\eta}$ is also an isomorphism of multiplicative hom-Jordan-Lie algebras.
\end{proof}
\section{The adjniont representation of hom-Jordan-Lie algebras}
Let $(L, [\cdot, \cdot]_{L}, \alpha)$ be a regular hom-Jordan-Lie algebra. We consider that $L$ represents on itself via the bracket with respect to the morphism $\alpha$. A very interesting phenomenon is that the adjoint representation of a hom-Jordan-Lie algebra is not unique as one will see in sequel.
\begin{defn}
For any integer $s$,  the $\alpha^{s}$-adjoint representation of the regular  hom-Jordan-Lie algebra $(L, [\cdot, \cdot]_{L}, \alpha)$,  which we denote by $\ad_{s}$,  is defined by
$$\ad_{s}(u)(v)=\delta[\alpha^{s}(u), v]_{L}, \qquad \forall u, v\in L.$$
\end{defn}
\begin{lem}
With the above notations,  we have
$$\ad_{s}(\alpha(u))\circ \alpha=\alpha\circ \ad_{s}(u);$$
$$\ad_{s}([u, v]_{L})\circ \alpha=\ad_{s}(\alpha(u))\circ \ad_{s}(v)-\delta\ad_{s}(\alpha(v))\circ \ad_{s}(u).$$
Thus the definition of $\alpha^{s}$-adjoint representation is well defined.
\end{lem}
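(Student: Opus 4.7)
The plan is to verify both identities by direct computation, relying on three tools: the multiplicativity of $\alpha$ (so that $\alpha^{s}([u,v]_L) = [\alpha^s(u), \alpha^s(v)]_L$ for all integers $s$, since the algebra is regular), the Jordan symmetry $[x,y]_L = -\delta[y,x]_L$, and the hom-Jordan-Jacobi identity from Definition 2.2.

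For the first identity, I would simply evaluate both sides on an arbitrary $w \in L$. The left-hand side becomes $\delta[\alpha^{s+1}(u), \alpha(w)]_L$, while the right-hand side becomes $\delta\,\alpha([\alpha^s(u), w]_L)$, and these agree because $\alpha$ commutes with $\alpha^s$ and is multiplicative. This step is essentially routine.

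For the second identity, I would again evaluate both sides on an arbitrary $w \in L$. The left-hand side expands to $\delta[[\alpha^s(u), \alpha^s(v)]_L, \alpha(w)]_L$ after using multiplicativity to absorb $\alpha^s$ into the inner bracket. Applying Jordan symmetry turns this into $-\delta^2[\alpha(w), [\alpha^s(u), \alpha^s(v)]_L]_L$, at which point the hom-Jordan-Jacobi identity with the substitution $x = w$, $y = \alpha^s(u)$, $z = \alpha^s(v)$ rewrites the bracket as $[\alpha^{s+1}(u), [\alpha^s(v), w]_L]_L + [\alpha^{s+1}(v), [w, \alpha^s(u)]_L]_L$ (up to sign). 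On the right-hand side, one expands the compositions using the definition of $\ad_s$, producing $\delta^2[\alpha^{s+1}(u), [\alpha^s(v), w]_L]_L - \delta^2[\alpha^{s+1}(v), [\alpha^s(u), w]_L]_L$; a final application of Jordan symmetry to one of these terms matches the two expressions.

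The main obstacle is purely bookkeeping: the identity carries six factors of $\delta$ (two from each $\ad_s$ on the right, two from the Jordan symmetry manipulations on the left) and one must ensure these telescope correctly given $\delta^2 = 1$. The Jacobi substitution itself is straightforward once the inner bracket has been normalized via Jordan symmetry to the form $[\alpha(w), [\cdot,\cdot]_L]_L$ that matches the hom-Jordan-Jacobi identity as stated in Definition 2.2(1). Once both identities are established, they are exactly conditions (15) and (16) of Definition 4.1 with $V = L$ and $A = \alpha$, so $\ad_s$ is a representation and the definition is well posed.
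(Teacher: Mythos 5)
Your proposal follows the paper's proof essentially verbatim: evaluate each identity on an arbitrary argument, use multiplicativity of $\alpha$ to write $\alpha^{s}([u,v]_L)=[\alpha^{s}(u),\alpha^{s}(v)]_L$, flip with Jordan symmetry, and invoke the hom-Jordan-Jacobi identity with $x=w$, $y=\alpha^{s}(u)$, $z=\alpha^{s}(v)$, concluding that conditions (15)--(16) hold with $V=L$, $A=\alpha$. The only blemish is the coefficient $-\delta^{2}$ on the second term of your expanded right-hand side, which should be $-\delta\cdot\delta^{2}=-\delta$ (the outer $-\delta$ times one $\delta$ from each $\ad_{s}$); with that corrected the two sides match exactly as in the paper.
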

\begin{proof}
The conclusion follows from
\begin{eqnarray*}
\ad_{s}(\alpha(u))(\alpha(v))&=&\delta[\alpha^{s+1}(u), \alpha(v)]_{L}\\
&=&\alpha(\delta[\alpha^{s}(u), v]_{L})=\alpha\circ \ad_{s}(u)(v)
\end{eqnarray*}
and
\begin{eqnarray*}
\ad_{s}([u, v]_{L})(\alpha(w))&=&\delta[\alpha^{s}[u, v]_{L}, \alpha(w)]_{L}\\
&=&\delta[[\alpha^{s}(u), \alpha^{s}(v)]_{L}, \alpha(w)]_{L}\\
&=&-[\alpha(w), [\alpha^{s}(u), \alpha^{s}(v)]_{L}]_{L}\\
&=&[\alpha^{s+1}(u), [\alpha^{s}(v), w]_{L}]_{L}+[\alpha^{s+1}(v), [w, \alpha^{s}(u)]_{L}]_{L}\\
&=&[\alpha^{s+1}(u), [\alpha^{s}(v), w]_{L}]_{L}-\delta[\alpha^{s+1}(v), [\alpha^{s}(u), w]_{L}]_{L}\\
&=&\ad_{s}(\alpha(u))(\ad_{s}(v)(w))-\delta\ad_{s}(\alpha(v))(\ad_{s}(u)(w)).
\end{eqnarray*}
This completes the proof.
\end{proof}
The set of $k$-hom-cochains on $L$ with coefficients in $L$, which we denote by $C^{k}_{\alpha}(L;L)$ is given by
$$C^{k}_{\alpha}(L;L)=\{f\in C^{k}(L;L)|\alpha \circ f=f\circ \alpha\}.$$
  In particular,  the set of 0-hom-cochains are given by:
$$C^{0}_{\alpha}(L;L)=\{u\in L|\alpha(u)=u\}.$$

Associated to the $\alpha ^{s}$-adjoint representation,  the corresponding  operator
$$d_s:C_{\alpha}^{k}(L;L)
\rightarrow C_{\alpha}^{k+1}(L;L)(k=1,2)$$ is given by
\begin{equation}
d_{s}f(u_{1},u_{2})=\delta[\alpha^{1+s}(u_{1}),f(u_{2})]-[\alpha^{1+s}(u_{2}),f(u_{1})]-\delta f([u_{1}, u_{2}]);
\end{equation}
\begin{eqnarray*}
d_{s}f(u_{1},u_{2},u_{3})
&=&\delta[\alpha^{2+s}(u_{1}),f(u_{2},u_{3})]-[\alpha^{2+s}(u_{2}),f(u_{1},u_{3})]+\delta[\alpha^{2+s}(u_{3}),f(u_{1},u_{2})]\\
&&-f([u_{1}, u_{2}],\alpha(u_{3}))+\delta f([u_{1}, u_{3}],\alpha(u_{2}))-f([u_{2}, u_{3}],\alpha(u_{1})).
\end{eqnarray*}

For the $\alpha^{s}$-adjoint representation $\ad_{s}$,  we obtain the $\alpha ^{s}$-adjoint complex $(C_{\alpha}^{}(L;L), d_{s})$.

In the case of  hom-Lie algebras,  a 1-cocycle associated to the adjoint representation is a derivation. Similarly,  we have
\begin{prop}
Associated to the $\alpha^{s}$-adjoint representations $\ad_{s}$ of
the regular hom-Jordan-Lie algebra $(L, [\cdot, \cdot]_{L},
\alpha)$, It satisfies $\delta^{s+1}=1$, $D\in \alpha^{1}(L, L)$ is
a 1-cocycle if and only if D is an $\alpha^{s+1}$-derivation. i.e.
$D\in \Der_{\alpha^{s+1}}(L)$.
\end{prop}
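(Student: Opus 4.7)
The plan is to unpack the 1-cocycle condition from the coboundary formula (20) and compare it, term by term, with the defining identity of an $\alpha^{s+1}$-derivation from Definition 3.1. Two things must match: (i) the compatibility $D\circ\alpha=\alpha\circ D$, and (ii) the Leibniz-type identity on brackets. Compatibility is built in on both sides: membership in $C^{1}_{\alpha}(L;L)$ already requires $\alpha\circ D=D\circ\alpha$, and this is exactly the first clause of Definition 3.1. So the real content is in (ii).

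Next, I would write out $d_{s}D(u,v)=0$ using (20), namely
\[
\delta[\alpha^{s+1}(u),D(v)]_{L}-[\alpha^{s+1}(v),D(u)]_{L}-\delta D([u,v]_{L})=0,
\]
and solve for $D([u,v]_{L})$. Multiplying through by $\delta$ and applying the Jordan symmetry $[x,y]_{L}=-\delta[y,x]_{L}$ to the middle term (which contributes a factor of $-\delta$, hence $-\delta\cdot\delta=-1$ after the initial $\delta$ multiplication, or equivalently $\delta^{2}=1$ turns $[\alpha^{s+1}(v),D(u)]_{L}$ into $-\delta[D(u),\alpha^{s+1}(v)]_{L}$), I would rearrange the cocycle condition into the canonical form
\[
D([u,v]_{L})=[\alpha^{s+1}(u),D(v)]_{L}+[D(u),\alpha^{s+1}(v)]_{L}.
\]

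Then I compare with the $\alpha^{s+1}$-derivation condition from (11) with $k=s+1$:
\[
D([u,v]_{L})=\delta^{s+1}\bigl([D(u),\alpha^{s+1}(v)]_{L}+[\alpha^{s+1}(u),D(v)]_{L}\bigr).
\]
The two identities agree precisely when $\delta^{s+1}=1$, which is exactly the hypothesis. This gives both implications simultaneously: under $\delta^{s+1}=1$, the cocycle identity becomes the derivation identity and conversely.

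I expect no real obstacle beyond careful bookkeeping of the signs $\delta$, the Jordan skew-symmetry rule, and the convention $\delta^{2}=1$. The only conceptual point to flag is precisely where the hypothesis $\delta^{s+1}=1$ is used: it is forced by matching the normalization of the two identities, since the coboundary formula (20) is written with no $\delta^{s+1}$ prefactor while the derivation identity (11) carries one. Once this is isolated, the equivalence $D\in Z^{1}(L;\ad_{s})\iff D\in\Der_{\alpha^{s+1}}(L)$ is immediate.
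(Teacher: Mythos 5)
Your proposal is correct and follows essentially the same route as the paper: unpack $d_{s}D(u,v)=0$ from the formula for $d_{s}$, use $\delta^{2}=1$ and the Jordan symmetry $[x,y]_{L}=-\delta[y,x]_{L}$ to rewrite the middle term, and observe that the hypothesis $\delta^{s+1}=1$ is exactly what makes the result coincide with the $\alpha^{s+1}$-derivation identity (11). Your explicit remark that the $\alpha$-compatibility clause is already guaranteed by $D\in C^{1}_{\alpha}(L;L)$ is a detail the paper leaves implicit, but the argument is the same.
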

\begin{proof}
The conclusion follows directly from the definition of the operator $d_{s}$. $D$ is closed if and only if
$$d_{s}(D)(u, v)=\delta[\alpha^{s+1}(u), D(v)]_{L}-[\alpha^{s+1}(v), D(u)]_{L}-\delta D[u, v]_{L}=0,$$
in the other word,
\begin{eqnarray*}
D[u, v]_{L}&=&-\delta[\alpha^{s+1}(v), D(u)]_{L}+[\alpha^{s+1}(u), D(v)]_{L}\\
&=&\delta^{s+1}([D(u), \alpha^{s+1}(v)]_{L}+[\alpha^{s+1}(u), D(v)]_{L}),
\end{eqnarray*}
which implies that $D$ is an $\alpha^{s+1}$-derivation.
\end{proof}

Let $\psi\in C_{\alpha}^{2}(L;L)$  be a bilinear operator commuting
with $\alpha$, also $\psi(u, v)=-\delta\psi(v,u )$. Consider a
t-parametrized family of bilinear operations
\begin{eqnarray}
[u, v]_{t}=[u, v]_{L}+t\psi(u, v).
\end{eqnarray}
Since $\psi$ commutes with $\alpha$,  $\alpha$ is a morphism with respect to the bracket $[\cdot, \cdot]_{t}$ for every $t$.
If all the brackets $[\cdot, \cdot]_{t}$ endow $(L, [\cdot, \cdot]_{t}, \alpha)$ regular hom-Jordan-Lie algebra structures,  we say that $\psi$ generates a deformation of the regular hom-Jordan-Lie algebra $(L, [\cdot, \cdot]_{L}, \alpha)$. By computing the hom-Jordan-Jacobi identity of $[\cdot, \cdot]_{t}$
\begin{eqnarray*}
&&[\alpha(u), [v, w]_{t}]_{t}+c.p.(u, v, w)\\
&=&[\alpha(u), [v, w]_{L}]_{t}+[\alpha(u), t\psi(v, w)]_{t}+c.p.(u, v, w)\\
&=&[\alpha(u), [v, w]_{L}]_{L}+t\psi(\alpha(u), [v, w]_{L})\\
&&+[\alpha(u), t\psi(v, w)]_{L}+t\psi(\alpha(u), t\psi(v, w))+c.p.(u, v, w)=0,
\end{eqnarray*}
this is equivalent to the conditions
\begin{equation}\psi(\alpha(u), \psi(v, w))+c.p.(u, v, w)=0;\end{equation}
\begin{equation}
(\psi(\alpha(u), [v, w]_{L})+[\alpha(u), \psi(v, w)_{L}]_{L})+c.p.(u, v, w)=0.
\end{equation}
Obviously,  (23) means that $\psi$ must itself define a hom-Jordan-Lie algebra structure on $L$. Furthermore, (24) means that $\psi$ is closed with respect to the $\alpha^{-1}$-adjoint representation $\ad_{-1}$,  i.e. $d_{-1}\psi=0$.
\begin{eqnarray*}
&&d_{-1}\psi(u, v, w)\\
&=&\delta[\alpha(u), \psi(v, w)]_{L}-[\alpha(v), \psi(u, w)]_{L}+\delta[\alpha(w), \psi(u, v)]_{L}\\
&&-\psi([u, v]_{L}, \alpha(w))+\delta\psi([u, w]_{L}, \alpha(v))-\psi([v, w]_{L}, \alpha(u))\\
&=&\delta[\alpha(u), \psi(v, w)]_{L}+\delta[\alpha(v), \psi(w, u)]_{L}+\delta[\alpha(w), \psi(u, v)]_{L}\\
&&+\delta\psi(\alpha(w), [u, v]_{L})+\delta\psi(\alpha(v), [w, u]_{L})+\delta\psi(\alpha(u), [v, w]_{L})\\
&=&\delta([\alpha(u), \psi(v, w)]_{L}+\psi(\alpha(u), [v, w]_{L})+c.p.(u, v, w))\\
&=&0.
\end{eqnarray*}

A deformation is  said to be trivial if there is a linear operator $N\in C_{\alpha}^{1}(L;L)$ such that for $T_{t}=\mathrm{id}+tN$,  there holds
\begin{eqnarray}
T_{t}[u, v]_{t}=[T_{t}(u), T_{t}(v)]_{L}.
\end{eqnarray}
\begin{defn}
A linear operator $N\in  C_{\alpha}^{1}(L, L)$ is called a hom-Nijienhuis operator if we have
\begin{eqnarray}
[Nu, Nv]_{L}=N[u, v]_{N},
\end{eqnarray}
where the bracket $[\cdot, \cdot]_{N}$ is defined by
\begin{eqnarray}
[u, v]_{N}\triangleq[Nu, v]_{L}+[u, Nv]_{L}-N[u, v]_{L}.
\end{eqnarray}
\end{defn}
\begin{thm}
Let $N\in C_{\alpha}^{1}(L, L)$ be a hom-Nijienhuis operator. Then a deformation of the regular hom-Jordan-Lie algebra $(L, [\cdot, \cdot]_{L}, \alpha)$ can be obtained by putting
$$\psi(u, v)=d_{-1}N(u, v)=[u, v]_{N}.$$
Furthermore,  this deformation is trivial.
\end{thm}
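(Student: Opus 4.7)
The statement contains two claims: (a) that $\psi := d_{-1}N$ does indeed generate a deformation, and (b) that this deformation is trivial. For (a), I must check the two conditions (23) and (24) isolated earlier in Section~6. For (b), I must exhibit $T_t = \mathrm{id}+tN$ as the required conjugating linear operator. Both halves ultimately rest on the single algebraic identity that defines $N$ as hom-Nijienhuis, namely $[Nu, Nv]_L = N[u,v]_N$, combined with the hom-Jordan-Jacobi identity of $(L, [\cdot,\cdot]_L, \alpha)$.

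For condition (24), I would use the cochain complex property $d_{-1}\circ d_{-1}=0$ (the analog of Proposition~4.6 applied to the $\alpha^{-1}$-adjoint representation $\ad_{-1}$). Since $\psi = d_{-1}N$ with $N\in C_{\alpha}^{1}(L,L)$, this gives $d_{-1}\psi = d_{-1}^{2}N = 0$ immediately, which the excerpt already showed to be the exact content of (24). Condition (23), the inner Jacobi-type identity $\sum_{c.p.}\psi(\alpha(u),\psi(v,w))=0$, is the substantive part. I would expand
$$\psi(\alpha(u),\psi(v,w)) = [N\alpha(u),\psi(v,w)]_L + [\alpha(u), N\psi(v,w)]_L - N[\alpha(u),\psi(v,w)]_L,$$
use $N\alpha=\alpha N$, and use the Nijenhuis identity to rewrite $N\psi(v,w) = N[v,w]_N = [Nv,Nw]_L$. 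Further expanding $\psi(v,w) = [Nv,w]_L + [v,Nw]_L - N[v,w]_L$ inside the remaining brackets produces a sum that splits into three groups: terms of the form $[\alpha(Nu),[Nv,w]_L]_L$ (and its cyclic partners), terms $[\alpha(u),[Nv,Nw]_L]_L$, and terms to which $N$ is applied from the outside. Each of the first two groups is annihilated under cyclic summation by the hom-Jordan-Jacobi identity of $L$ applied to an appropriate triple ($(Nu,Nv,w)$ and cyclic variants, resp.\ $(u,Nv,Nw)$), while the outer-$N$ group collapses into $-N(\sum_{c.p.}[\alpha(u),\psi(v,w)]_L)$, which itself vanishes by repeating the argument in one lower layer.

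Verifying triviality (b) is then a direct computation. With $T_t = \mathrm{id}+tN$, I expand
$$T_t[u,v]_t = (\mathrm{id}+tN)\bigl([u,v]_L + t\,\psi(u,v)\bigr) = [u,v]_L + t\bigl(\psi(u,v)+N[u,v]_L\bigr) + t^{2}N\psi(u,v).$$
The $t$-coefficient simplifies to $[Nu,v]_L + [u,Nv]_L$ because the $-N[u,v]_L$ inside $\psi(u,v) = [u,v]_N$ cancels the explicit $N[u,v]_L$, and the $t^{2}$-coefficient becomes $[Nu,Nv]_L$ by the Nijenhuis identity. Comparing with $[T_t u, T_t v]_L = [u,v]_L + t([Nu,v]_L+[u,Nv]_L) + t^{2}[Nu,Nv]_L$ finishes the proof.

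I expect the main obstacle to be the verification of (23), where a naive expansion produces nine bracket terms and three outer-$N$ terms; the cancellation is not visible until one both applies hom-Jordan-Jacobi to three different $N$-decorated triples and invokes the Nijenhuis identity to absorb $N\psi$ as $[N\cdot,N\cdot]_L$. The triviality computation and condition (24), by contrast, are essentially bookkeeping once the cochain-complex property $d_{-1}^{2}=0$ is acknowledged.
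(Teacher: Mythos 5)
Your overall strategy---expand $\psi(\alpha(u),\psi(v,w))$ through the bracket $[\cdot,\cdot]_{N}$, use $N\circ\alpha=\alpha\circ N$, absorb $N\psi(v,w)$ as $[Nv,Nw]_{L}$ via the Nijenhuis identity, and finish with the hom-Jordan-Jacobi identity---is exactly the paper's strategy, and your handling of condition (24) (via $d_{-1}\psi=d_{-1}^{2}N=0$) and of the triviality claim (expanding $T_{t}[u,v]_{t}$ and $[T_{t}u,T_{t}v]_{L}$ and invoking (26), (27)) coincides with the paper's. The problem is the cancellation scheme you propose for condition (23), which is the substantive part of the proof: the three groups you describe do not vanish separately, so the plan as written would not close.

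Concretely: the cyclic sum $\sum_{c.p.(u,v,w)}[\alpha(u),[Nv,Nw]_{L}]_{L}$ is \emph{not} an instance of the hom-Jordan-Jacobi identity for the triple $(u,Nv,Nw)$; that identity yields $[\alpha(u),[Nv,Nw]]+[\alpha(Nv),[Nw,u]]+[\alpha(Nw),[u,Nv]]=0$, whose second and third terms carry $N$ on the \emph{outer} argument and are not cyclic images of the first. Likewise $\sum_{c.p.}[\alpha(u),\psi(v,w)]_{L}$ does not vanish ``one layer down.'' Take the ordinary Lie algebra ($\alpha=\mathrm{id}$, $\delta=1$) spanned by $x,y,z$ with $[x,y]=y$, $[x,z]=2z$, $[y,z]=0$, and set $N(x)=0$, $N(y)=x$, $N(z)=z$: one checks $N$ is hom-Nijienhuis, yet $\psi(y,z)=2z$, $\psi(z,x)=0$, $\psi(x,y)=-x$, so $\sum_{c.p.}[u,\psi(v,w)]=6z$, $\sum_{c.p.}[Nu,\psi(v,w)]=2z$, $\sum_{c.p.}[u,N\psi(v,w)]=4z$; each of your groups is nonzero and only the total $2z+4z-6z$ vanishes. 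Moreover the terms $-[N\alpha(u),N[v,w]_{L}]_{L}$ arising from the expansion fit into none of your three groups. The correct bookkeeping (as in the paper) pairs $[N\alpha(u),[Nv,w]_{L}]_{L}$ with the cyclic image $[N\alpha(v),[w,Nu]_{L}]_{L}$ of $[N\alpha(u),[v,Nw]_{L}]_{L}$ and with $[\alpha(w),N[u,v]_{N}]_{L}=[\alpha(w),[Nu,Nv]_{L}]_{L}$ so as to form the Jacobi identity for the triple $(Nu,Nv,w)$, and applies the Nijenhuis identity a second time to rewrite $[N\alpha(u),N[v,w]_{L}]_{L}$ as $N[\alpha(u),[v,w]_{L}]_{N}$, after which the outer-$N$ terms cancel against it through the Jacobi identities for $(Nu,v,w)$, $(u,Nv,w)$ and $(u,v,Nw)$. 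You need to reorganize the cancellation along these lines.
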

\begin{proof}
Since $\psi=d_{-1}N$,  $d_{-1}\psi=0$ is valid. To see that $\psi$ generates a deformation,  we need to check the hom-Jordan Jacobi identity for $\psi$. Using the explicit expression of  $\psi$,  we get
\begin{eqnarray*}&&\psi(\alpha(u), \psi(v, w))+c.p.(u, v, w)\\
&=&[\alpha(u), [v, w]_{N}]_{N}+c.p.(u, v, w)\\
&=&([\alpha(u), [Nv, w]_{L}+[v, Nw]_{L}-N[v, w]_{L}]_{N})+c.p.(u, v, w)\\
&=&([\alpha(u), [Nv, w]_{L}]_{N}+[\alpha(u), [v, Nw]_{L}]_{N}-[\alpha(u), N[v, w]_{L}]_{N})+c.p.(u, v, w)\\
&=&([N\alpha(u), [Nv, w]_{L}]_{L}+[\alpha(u), N[Nv, w]_{L}]_{L}-N[\alpha(u), [Nv, w]_{L}]_{L}\\
&&+[N\alpha(u), [v, Nw]_{L}]_{L}+[\alpha(u), N[v, Nw]_{L}]_{L}-N[\alpha(u), [v, Nw]_{L}]_{L}\\
&&-[N\alpha(u), N[v, w]_{L}]_{L}-[\alpha(u), N^{2}[v, w]_{L}]_{L}+N[\alpha(u), N[v, w]_{L}]_{L})+c.p.(u, v, w).
\end{eqnarray*}
Since
\begin{eqnarray*}&&[\alpha(u), N[Nv, w]_{L}]_{L}+[\alpha(u), N[v, Nw]_{L}]_{L}-[\alpha(u), N^{2}[v, w]_{L}]_{L}\\
&=&[\alpha(u), N([Nv, w]_{L}+[v, Nw]_{L}-N[v, w]_{L})]_{L}\\
&=&[\alpha(u), N[v, w]_{N}]_{L},
\end{eqnarray*}
we have
\begin{eqnarray*}&&\psi(\alpha(u), \psi(v, w))+c.p.(u, v, w)\\
&=&([N\alpha(u), [Nv, w]_{L}]_{L}-N[\alpha(u), [Nv, w]_{L}]_{L}+[N\alpha(u), [v, Nw]_{L}]_{L}\\
&&-N[\alpha(u), [v, Nw]_{L}]_{L}-[N\alpha(u), N[v, w]_{L}]_{L}+N[\alpha(u), N[v, w]_{L}]_{L}+[\alpha(u), N[v, w]_{N}]_{L})\\
&&+([N\alpha(v), [Nw, u]_{L}]_{L}-N[\alpha(v), [Nw, u]_{L}]_{L}+[N\alpha(v), [w, Nu]_{L}]_{L}\\
&&-N[\alpha(v), [w, Nu]_{L}]_{L}-[N\alpha(v), N[w, u]_{L}]_{L}+N[\alpha(v), N[w, u]_{L}]_{L}+[\alpha(v), N[w, u]_{N}]_{L})\\
&&+([N\alpha(w), [Nu, v]_{L}]_{L}-N[\alpha(w), [Nu, v]_{L}]_{L}+[N\alpha(w), [u, Nv]_{L}]_{L}\\
&&-N[\alpha(w), [u, Nv]_{L}]_{L}-[N\alpha(w), N[u, v]_{L}]_{L}+N[\alpha(w), N[u, v]_{L}]_{L}+[\alpha(w), N[u, v]_{N}]_{L})\\
&=&[N\alpha(u), [Nv, w]_{L}]_{L}+[N\alpha(v), [w, Nu]_{L}]_{L}\\
&&+[\alpha(w), N([u, v]_{N})]_{L}+c.p.(u, v, w)\\
&&+(N[\alpha(v), N[w, u]_{L}]_{L}-[N\alpha(v), N[w, u]_{L}]_{L})+c.p.(u, v, w)\\
&&-N[\alpha(u), [Nv, w]_{L}]_{L}-N[\alpha(w), [u, Nv]_{L}]_{L}+c.p.(u, v, w).
\end{eqnarray*}
Since $N$ commutes with $\alpha$,  by the hom-Jordan Jacobi identity of $L$,  we have
$$[N\alpha(u), [Nv, w]_{L}]_{L}+[\alpha(Nv), [w, Nu]_{L}]_{L}+[\alpha(w), [Nu, Nv]_{L}]_{L}=0.$$
Since $N$ is a hom-Nijienhuis operator,  we have
\begin{eqnarray*}
&&[N\alpha(u), [Nv, w]_{L}]_{L}+[\alpha(Nv), [w, Nu]_{L}]_{L}+[\alpha(w), N[u, v]_{N}]_{L}\\
&&+c.p.(u, v, w)=0.
\end{eqnarray*}
Furthermore,  also by the fact that $N$ is a hom-Nijienhuis operator,  we obtain
\begin{eqnarray*}
&&N[\alpha(v), N[w, u]_{L}]_{L}-[N\alpha(v), N[w, u]_{L}]_{L}+c.p.(u, v, w)\\
&=&N[\alpha(v), N[w, u]_{L}]_{L}-N[\alpha(v), [w, u]_{L}]_{N}+c.p.(u, v, w)\\
&=&-N[N\alpha(v), [w, u]_{L}]_{L}+N^{2}[\alpha(v), [w, u]_{L}]_{L})+c.p.(u, v, w).
\end{eqnarray*}
Thus by the hom-Jordan Jacobi identity of $L$,  we have
\begin{eqnarray*}
&&N[\alpha(v), N[w, u]_{L}]_{L}-[N\alpha(v), N[w, u]_{L}]_{L})+c.p.(u, v, w)\\
&=&-N[N\alpha(v), [w, u]_{L}]_{L}+c.p.(u, v, w).
\end{eqnarray*}
Therefore,  we have
\begin{eqnarray*}&&\psi(\alpha(u), \psi(v, w))+c.p.(u, v, w)\\
&=&-N[N\alpha(v), [w, u]_{L}]_{L}-N[\alpha(u), [Nv, w]_{L}]_{L}\\
&&-N[\alpha(w), [u, Nv]_{L}]_{L}+c.p.(u, v, w)\\
&=&-N([\alpha(Nv), [w, u]_{L}]_{L}+[\alpha(u), [Nv, w]_{L}]_{L}\\
&&+[\alpha(w), [u, Nv]_{L}]_{L})+c.p.(u, v, w)\\
&=&0.
\end{eqnarray*}
Thus $\psi$ generates a deformation of the hom-Jordan-Lie algebra $(L, [\cdot, \cdot]_{L}, \alpha)$.

Let $T_{t}=\mathrm{id}+tN$,  then we have
\begin{eqnarray*}
T_{t}[u, v]_{t}&=&(\mathrm{id}+tN)([u, v]_{L}+t\psi(u, v))\\
&=&(\mathrm{id}+tN)([u, v]_{L}+t[u, v]_{N})\\
&=&[u, v]_{L}+t([u, v]_{N}+N[u, v]_{L})+t^{2}N[u, v]_{N}.
\end{eqnarray*}
On the other hand,  we have
\begin{eqnarray*}
[T_{t}(u), T_{t}(v)]_{L}&=&[u+tNu, v+tNv]_{L}\\
&=&[u, v]_{L}+t([Nu, v]_{L}+[u, Nv]_{L})+t^{2}[Nu, Nv]_{L}.
\end{eqnarray*}
By(26),  (27),  we have
$$T_{t}[u, v]_{t}=[T_{t}(u), T_{t}(v)]_{L}, $$
which implies that the deformation is trivial.
\end{proof}

\section{$T$*-extensions of hom-Jordan-Lie algebras}
The method of the $T$*-extension was introduced in [6, 7] and the
$T$*-extension of an algebra is quadratic. The theory of quadratic
(color) hom-Lie algebras is referred to [1].
\begin{defn}
Let $(L,[\cdot,\cdot]_L,\alpha)$ be a hom-Jordan-Lie algebra. A bilinear form  $f$ on $L$ is said to be nondegenerate if
$$L^\perp=\{x\in L|f(x,y)=0, \forall y\in L\}=0;$$
invariant if
$$f([x,y],z)=f(x,[y,z]), \forall x,y,z\in L;$$
Jordansymmetric if
$$f(x,y)=f(y,x).$$
A subspace $I$ of $L$ is called isotropic if $I\subseteq I^\bot$.
\end{defn}

\begin{defn}
A bilinear form $f$ on a hom-Jordan-Lie algebra $(L,[\cdot,\cdot]_L,\alpha)$ is said to be Jordan consistent if $f$ satisfies
$$f(x,y)=0, \forall x, y\in L, |x|\neq|y|.$$
Throughout this section, we only consider Jordan consistent bilinear
forms.
\end{defn}

\begin{defn}
Let $(L,[\cdot,\cdot]_L,\alpha)$ be a hom-Jordan-Lie algebra over a field $\K$. If $L$ admits a nondegenerate invariant Jordansymmetric bilinear form $f$, then we call $(L,f,\alpha)$ a quadratic hom-Jordan-Lie algebra. In particular, a quadratic vector space $V$ is a  vector space admitting a nondegenerate Jordansymmetric bilinear form.

 Let $(L^{'},[\cdot,\cdot]_L',\beta)$ be another hom-Jordan-Lie algebra. Two quadratic hom-Jordan-Lie algebras $(L,f,\alpha)$ and
  $(L^{'},f',\beta)$ are said to be isometric if there exists a hom-Jordan-Lie algebra isomorphism $\phi: L\rightarrow L^{'}$ such that
   $f(x, y)=f'(\phi(x), \phi(y)), \forall x, y\in L$.
\end{defn}
\begin{lem}
Let $\ad$ be the adjoint representation of a hom-Jordan-Lie algebra $(L,[\cdot,\cdot]_{L},\alpha)$, and let us consider the linear map $\pi:L\rightarrow \mathrm{End}(L^{*})$($L^{*}$ is the dual space of L) defined by, $\pi(x)(f)(y)=-\delta f\circ \ad(x)(y),\forall x,y\in L$. Then $\pi$ is a representation of $L$ on $(L^{*},\tilde{\alpha})$ if and only if
\begin{equation}
\alpha\circ \ad\alpha(x)=\ad x\circ \alpha;
\end{equation}
\begin{equation}
\ad x\circ \ad\alpha(y)-\delta\ad y\circ \ad\alpha(x)=\alpha\circ \ad[x,y]_{L}.
\end{equation}
We call the representation $\pi$ the coadjoint representation of $L$, with $\tilde{\alpha}(f)=f\circ \alpha$.
\end{lem}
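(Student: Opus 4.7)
The plan is to unfold the two defining conditions of a representation (from Definition~4.1 applied to $\rho=\pi$ and $A=\tilde{\alpha}$) and show, by evaluating both sides on an arbitrary functional and then on an arbitrary element, that they are equivalent to the operator identities (28) and (29) respectively. The whole argument reduces to careful bookkeeping with $\delta$-signs and compositions, together with the standard observation that if $f\circ T = f\circ S$ holds for every $f\in L^{*}$, then $T=S$ as maps $L\to L$ (so equalities of functionals can be promoted to equalities of operators).

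\textbf{Step 1 (first axiom $\Leftrightarrow$ (28)).} I would first translate the compatibility condition $\pi(\alpha(x))\circ\tilde{\alpha}=\tilde{\alpha}\circ\pi(x)$. Evaluating the left-hand side on $f\in L^{*}$ and then on $y\in L$ gives
$$\pi(\alpha(x))(\tilde{\alpha}(f))(y)=-\delta\,\tilde{\alpha}(f)(\ad\alpha(x)(y))=-\delta\,f(\alpha\circ\ad\alpha(x)(y)),$$
while the right-hand side is $\tilde{\alpha}(\pi(x)(f))(y)=\pi(x)(f)(\alpha(y))=-\delta f(\ad(x)\circ\alpha(y))$. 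Cancelling the common factor $-\delta$ and invoking that $L^{*}$ separates points of $L$, this equality for all $f,y$ is equivalent to $\alpha\circ\ad\alpha(x)=\ad(x)\circ\alpha$, which is (28).

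\textbf{Step 2 (second axiom $\Leftrightarrow$ (29)).} Next I would expand $\pi([x,y]_{L})\circ\tilde{\alpha}=\pi(\alpha(x))\circ\pi(y)-\delta\pi(\alpha(y))\circ\pi(x)$ in the same way. The left-hand side, evaluated on $f$ and then on $z$, produces
$$-\delta\,f\bigl(\alpha\circ\ad[x,y]_{L}(z)\bigr).$$
For the right-hand side, I would apply the definition of $\pi$ twice. Each application introduces a factor $-\delta$, and since $\delta^{2}=1$, the two factors collapse; one obtains
$$\pi(\alpha(x))(\pi(y)(f))(z)=f\bigl(\ad(y)\circ\ad\alpha(x)(z)\bigr),$$
and similarly $\pi(\alpha(y))(\pi(x)(f))(z)=f(\ad(x)\circ\ad\alpha(y)(z))$. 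Assembling both contributions and multiplying through by $-\delta$ gives
$$f\bigl(\alpha\circ\ad[x,y]_{L}(z)\bigr)=f\bigl(\ad(x)\circ\ad\alpha(y)(z)\bigr)-\delta\,f\bigl(\ad(y)\circ\ad\alpha(x)(z)\bigr).$$
Again using that $L^{*}$ is separating and that $z$ was arbitrary, this is equivalent to $\alpha\circ\ad[x,y]_{L}=\ad(x)\circ\ad\alpha(y)-\delta\,\ad(y)\circ\ad\alpha(x)$, i.e.\ exactly (29).

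\textbf{Step 3 (conclusion).} Since Definition~4.1 demands precisely the two conditions analysed in Steps~1 and~2, combining the two equivalences yields the claim: $\pi$ is a representation of $L$ on $(L^{*},\tilde{\alpha})$ if and only if (28) and (29) both hold. The only real subtlety is the careful handling of the two composed $-\delta$ factors (and the observation $\delta^{2}=1$) in Step~2; once that is done, no deeper structural argument is needed, and everything else is a mechanical unpacking of the definitions of $\pi$, $\tilde{\alpha}$ and $\ad$.
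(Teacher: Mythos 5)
Your proposal is correct and follows essentially the same route as the paper: unfold the two representation axioms from Definition~4.1 for $\rho=\pi$, $A=\tilde{\alpha}$, compute both sides on an arbitrary $f\in L^{*}$ (and $y$ or $z\in L$), track the $-\delta$ factors using $\delta^{2}=1$, and match the results against (28) and (29). Your explicit appeal to the fact that $L^{*}$ separates points, which upgrades the equality of functionals to the stated operator identities and so secures the ``only if'' direction, is a point the paper's own proof glosses over.
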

\begin{proof}
Firstly, we have
\begin{eqnarray*}
&&(\pi(\alpha(x))\circ \tilde{\alpha})(f)= -\delta \tilde{\alpha}(f)\circ \ad\alpha(x)= -\delta f\circ \alpha\circ \ad\alpha(x),
\end{eqnarray*}
and
\begin{eqnarray*}
&&\tilde{\alpha}(\pi(x))(f) =-\delta \tilde{\alpha}(f\circ \ad x)= -\delta f\circ \ad x\circ \alpha.
\end{eqnarray*}
Finally,
\begin{eqnarray*}
(\pi([x,y])\circ \tilde{\alpha})(f)= -\delta f\circ \alpha\circ \ad [x,y];
\end{eqnarray*}
\begin{eqnarray*}
&&(\pi(\alpha(x))\circ \pi(y)-\delta\pi(\alpha(y))\circ \pi(x))(f)\\
&=&-\delta \pi(\alpha(x))(f\circ ad y)+\pi(\alpha(y))(f\circ ad x)\\
&=&f\circ ad y\circ \ad\alpha(x)-\delta f\circ ad x\circ \ad\alpha(y)\\
&=&-\delta f\circ (\ad x\circ \ad\alpha(y)-\delta\ad y\circ \ad\alpha(x)).
\end{eqnarray*}
Then we have $$\pi(\alpha(x))\circ \tilde{\alpha}=\tilde{\alpha}(\pi(x));$$
$$\pi([x,y])\circ \tilde{\alpha}=\pi(\alpha(x))\circ \pi(y)-\delta\pi(\alpha(y))\circ \pi(x).$$
Then $\pi$ is a representation of $L$ on $(L^{*},\tilde{\alpha})$.
\end{proof}
\begin{lem}
Under the above notations, let $(L,[\cdot,\cdot]_L,\alpha)$ be a
hom-Jordan-Lie algebra, and $\omega:L\times L\rightarrow L^{*}$ be an
 bilinear map. Assume that the coadjoint representation
exists. The  space $L\oplus L^*$, provided with the
following bracket and a linear map defined respectively by
\begin{equation}
[x+f,y+g]_{L\oplus L^{\ast}}=[x,y]_{L}+\omega(x,y)+\delta\pi(x)g-\pi(y)f;
\end{equation}
\begin{equation}
\alpha^{'}(x+f)=\alpha(x)+f\circ\alpha.
\end{equation}
Then  $(L\oplus L^{*},[\cdot,\cdot]_{L\oplus L^{\ast}},\alpha^{'})$ is a hom-Jordan-Lie algebra if and only if $\omega$ is a 2-cocycle: $L\times L\rightarrow L^{*}$, i.e. $\omega\in Z^2(L, L^*)$.
\end{lem}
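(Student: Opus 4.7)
The plan is to verify directly the two axioms of a hom-Jordan-Lie algebra for the triple $(L\oplus L^{*},[\cdot,\cdot]_{L\oplus L^{\ast}},\alpha')$ and show that the hom-Jordan-Jacobi identity reduces exactly to the 2-cocycle condition $d_\pi^{2}\omega=0$, with the other conditions (Jordan symmetry of $\omega$, compatibility $\omega\circ\alpha=\tilde\alpha\circ\omega$) being automatic once one asks $\omega\in Z^{2}(L,L^{*})$ in the hom-cochain complex built in Section~4.

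First I would check Jordan symmetry of the new bracket. Expanding
$[x+f,y+g]_{L\oplus L^{*}}+\delta[y+g,x+f]_{L\oplus L^{*}}$ splits into the $L$-part, which vanishes because $[\cdot,\cdot]_{L}$ is Jordan-symmetric, plus the $L^{*}$-part $\omega(x,y)+\delta\omega(y,x)+\delta\pi(x)g-\pi(y)f+\delta(\delta\pi(y)f-\pi(x)g)$, which collapses to $\omega(x,y)+\delta\omega(y,x)$. Hence Jordan symmetry for $[\cdot,\cdot]_{L\oplus L^{*}}$ is equivalent to Jordan symmetry of $\omega$, one of the defining properties of a 2-cochain. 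Next I would check that $\alpha'$ is multiplicative: applied to $[x+f,y+g]_{L\oplus L^{*}}$ we get $\alpha[x,y]_{L}+\omega(x,y)\circ\alpha+\delta(\pi(x)g)\circ\alpha-(\pi(y)f)\circ\alpha$, while applying the bracket to $\alpha'(x+f)$ and $\alpha'(y+g)$ yields $[\alpha(x),\alpha(y)]_{L}+\omega(\alpha(x),\alpha(y))+\delta\pi(\alpha(x))(\tilde\alpha g)-\pi(\alpha(y))(\tilde\alpha f)$. Equality uses that $\alpha$ is a morphism on $L$, the intertwining relation $\pi(\alpha(x))\circ\tilde\alpha=\tilde\alpha\circ\pi(x)$ from Lemma on the coadjoint representation, and the hom-cochain condition $\tilde\alpha\circ\omega=\omega\circ(\alpha\times\alpha)$ built into $C^{2}_{\alpha,\tilde\alpha}(L,L^{*})$.

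The main computation is the hom-Jordan-Jacobi identity. I would split
$[\alpha'(x+f),[y+g,z+h]_{L\oplus L^{*}}]_{L\oplus L^{*}}+c.p.(x+f,y+g,z+h)$
into its $L$-component and its $L^{*}$-component. The $L$-component is $[\alpha(x),[y,z]_{L}]_{L}+c.p.$, which vanishes by the hom-Jordan-Jacobi identity of $L$. The $L^{*}$-component splits further into two pieces: the terms built purely from $\pi$ acting on $f,g,h$, and the terms involving $\omega$. The purely-representation piece, after expansion, reads
$\delta\pi(\alpha(x))\pi(y)h-\pi(\alpha(x))\pi(z)g-\pi([y,z]_{L})(\tilde\alpha f)+c.p.,$
which, grouped in pairs and using the representation identity
$\pi([y,z]_{L})\circ\tilde\alpha=\pi(\alpha(y))\circ\pi(z)-\delta\pi(\alpha(z))\circ\pi(y),$
cancels identically under the cyclic sum. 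The surviving $\omega$-piece is exactly
$\pi(\alpha(x))\omega(y,z)-\delta\pi(\alpha(y))\omega(x,z)+\pi(\alpha(z))\omega(x,y)-\omega([x,y]_{L},\alpha(z))+\delta\omega([x,y]_{L}\text{ cyc.})\cdots,$
which I recognize as $d_\pi^{2}\omega(x,y,z)$ in the coboundary operator defined in Section~4 applied to the coadjoint representation $\pi$. Hence the hom-Jordan-Jacobi identity holds for every $x,y,z\in L$ and every $f,g,h\in L^{*}$ if and only if $d_\pi^{2}\omega=0$, i.e.\ $\omega\in Z^{2}(L,L^{*})$.

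The main obstacle is the second step: tracking the many mixed terms in the $L^{*}$-component and seeing that each term involving only $\pi$ pairs off thanks to the coadjoint representation identities (28)--(29) from the previous lemma, so that a clean $\omega$-only remainder emerges. Once this bookkeeping is done carefully, both the ``only if'' and ``if'' directions follow from the same identity, and the proof is complete.
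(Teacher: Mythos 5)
Your proposal is correct and follows essentially the same route as the paper: split the hom-Jordan-Jacobi identity into the $L$-part (killed by the Jacobi identity of $L$), the pure-$\pi$ part (killed by the coadjoint representation identity $\pi([y,z]_L)\circ\tilde\alpha=\pi(\alpha(y))\pi(z)-\delta\pi(\alpha(z))\pi(y)$), and an $\omega$-remainder that is exactly the 2-cocycle condition. You are in fact slightly more complete than the paper, which silently omits the routine checks of Jordan symmetry of the bracket and multiplicativity of $\alpha'$ that you carry out explicitly.
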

\begin{proof}
For any elements $x+f, y+g, z+h\in L\oplus L^{*}$.
%\begin{eqnarray*}
%&&[y+g,x+f]_{L\oplus L^{\ast}}\\
%&=&([y,x]_{L}+\omega(y,x)+\delta\pi(y)f_\pi(x)g)\\
%&=&-\delta[x,y]_{L}-\delta\omega(y,x)-\pi(x)g+\delta\pi(x)f
%&=&-\delta[x+f,y+g].
%\end{eqnarray*}
Note that
\begin{eqnarray*}
&&[\alpha^{'}(x+f),[y+g,z+h]_{L\oplus L^{\ast}}]_{L\oplus L^{\ast}}+c.p.(x+f,y+g,z+h)\\
&=&[\alpha(x),[y,z]_{L}]_{L}+c.p.(x,y,z)\\
&&+\omega(\alpha(x),[y,z]_{L})+\delta\pi(\alpha(x))\omega(y,z)+c.p(x,y,z)\\
&&+\pi(\alpha(x))(\pi(y)h)-\delta\pi(\alpha(x))(\pi(z)g)\\
&&-\pi([y,z]_{L})f\circ\alpha+c.p.(x+f,y+g,z+h).
\end{eqnarray*}
By the hom-Jordan Jacobi identity
$$[\alpha(x),[y,z]_{L}]_{L}+c.p.(x,y,z)=0.$$ On the
other hand
\begin{eqnarray*}
&&\pi(\alpha(x))(\pi(y)h)-\delta\pi(\alpha(x))(\pi(z)g)\\&&-\pi([y,z]_{L})f\circ\alpha+c.p.\\
&=&\pi(\alpha(y))(\pi(z)f)-\delta\pi(\alpha(z))(\pi(y)f)-\pi([y,z]_{L})f\circ\alpha\\
&&+\pi(\alpha(z))(\pi(x)g)-\delta\pi(\alpha(x))(\pi(z)g)-\pi([z,x]_{L})g\circ\alpha\\
&&+\pi(\alpha(x))(\pi(y)h)-\delta\pi(\alpha(y))(\pi(x)h)-\pi([x,y]_{L})h\circ\alpha.
\end{eqnarray*}
Since $\pi$ is the coadjoint representation of $L$, we have
\begin{eqnarray*}
&&\pi([x,y]_{L})h\circ\alpha\\
&=&-\delta h(\alpha\circ\ad([x,y]_{L}))\\
&=&-\delta h\circ\ad(x)\ad(\alpha(y))+h\circ\ad(y) \ad(\alpha(x))\\
&=&(\pi(x)h)(\ad(\alpha(y)))-\delta(\pi(y)h)(\ad(\alpha(x)))\\
&=&-\delta\pi(\alpha(y))(\pi(x)h)+\pi(\alpha(x))(\pi(y)h).
\end{eqnarray*}
Therefore, we have
\begin{eqnarray*}
\pi(\alpha(x))\pi(y)h-\delta\pi(\alpha(y))\pi(x)h-\pi([x,y]_{L})h\circ\alpha=0.
\end{eqnarray*}
Obviously,
$$\pi(\alpha(y))(\pi(z)f)-\delta\pi(\alpha(x))(\pi(z)g)-\pi([y,z]_{L})f\circ\alpha+c.p.=0.$$
Consequently,
$$[\alpha^{'}(x+f),[y+g,z+h]_{\Omega}]_{\Omega}+c.p.(x+f,y+g,z+h)=0,$$
if and only if
\begin{eqnarray*}
0&=&\delta\pi(\alpha(x))(\omega(y,z))-\pi(\alpha(y))(\omega(x,z))+\delta\pi(\alpha(z))(\omega(x,y))\\
&&+\omega(\alpha(x),[y,z]_{L})+\omega([x,z]_{L},\alpha(y))-\delta\omega([x,y]_{L},\alpha(z))\\
&=&[\alpha(x),\omega(y,z)]-\delta[\alpha(y),\omega(x,z)]+[\alpha(z),\omega(x,y)]\\
&&-\delta\omega([y,z]_{L},\alpha(x))+\omega([x,z]_{L},\alpha(y))-\delta\omega([x,y]_{L},\alpha(z))\\
&=&\delta d_{-1}\omega(x,y,z).
\end{eqnarray*}
That is $\omega\in Z^{2}(L,L^{*})$.
Then confirmation holds if and only if $\omega\in Z^2(L, L^*)$. Consequently, we prove the lemma.
\end{proof}

Clearly, $L^*$ is an abelian hom-ideal of $(L\oplus L^{*},[\cdot,\cdot]_{\alpha^{'}},\alpha^{'})$ and $L$ is isomorphic to the factor hom-Jordan-Lie algebra $(L\oplus L^{*})/L^*$. Moreover, consider the following Jordansymmetric bilinear form $q_{L}$ on $L\oplus L^{*}$ for all $x+f, y+g\in L\oplus L^*$,
$$q_{L}(x+f,y+g)=f(y)+g(x).$$
Then we have the following lemma.

\begin{lem}\label{lemma3.2}
Let $L$, $L^*$, $\omega$ and $q_L$ be as above. Then the triple $(L\oplus L^{*},q_L,\alpha^{'})$ is a quadratic hom-Jordan-Lie algebra if and only if $\omega$ is Jordancyclic in the following sense:
$$w(x,y)(z)=w(y,z)(x) ~~\text{for all}~ x,y,z\in L.$$
\end{lem}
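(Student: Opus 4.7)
The plan is to first apply the preceding lemma (which, for $\omega\in Z^2(L,L^*)$, already delivers the hom-Jordan-Lie algebra structure on $(L\oplus L^*,[\cdot,\cdot]_{L\oplus L^*},\alpha')$) and then reduce the claim to checking the three defining properties of the bilinear form $q_L$: Jordansymmetry, nondegeneracy, and bracket invariance. Only the last of these carries any real content.

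I would dispatch the two easy properties first. Jordansymmetry is immediate from the definition, since $q_L(x+f,y+g)=f(y)+g(x)=g(x)+f(y)=q_L(y+g,x+f)$. Nondegeneracy is equally short: if $q_L(x+f,\,y+g)=0$ for every $y+g$, then taking $y=0$ and varying $g\in L^*$ forces $x=0$, while taking $g=0$ and varying $y\in L$ forces $f=0$. Neither argument places any condition on $\omega$.

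The heart of the proof is the invariance identity
\[q_L\bigl([x+f,y+g]_{L\oplus L^*},\,z+h\bigr)=q_L\bigl(x+f,\,[y+g,z+h]_{L\oplus L^*}\bigr).\]
I would expand both sides using the bracket formula from the preceding lemma and the defining relation $\pi(u)(k)(v)=-\delta k(\ad(u)(v))$ for the coadjoint representation, together with the Jordan symmetry $[u,v]_L=-\delta[v,u]_L$. After these substitutions, the terms arising from the three $\pi$-actions on $f,g,h$ and from the pairing of $h$ with $[x,y]_L$ (plus its cyclic analogues on the right-hand side) should regroup into matching scalar quantities built from $f([y,z]_L)$, $g([x,z]_L)$, $h([x,y]_L)$ and cancel across the two sides. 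What survives is precisely
\[\omega(x,y)(z)=\omega(y,z)(x),\]
so the invariance holds for all $x,y,z,f,g,h$ if and only if $\omega$ is Jordancyclic, giving both directions of the equivalence from a single computation.

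The main obstacle is purely bookkeeping with the sign $\delta$: it appears in the bracket defining $[\cdot,\cdot]_{L\oplus L^*}$, in the coadjoint action $\pi$, and when flipping bracket entries via Jordan symmetry, so one must verify that the various $\delta$-factors collapse correctly. Because $q_L$ pairs only $L$ with $L^*$, no cross terms involving two $L$-components or two $L^*$-components arise, so the cancellation is clean and the Jordancyclic condition on $\omega$ emerges as the unique residue.
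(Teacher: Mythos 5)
Your proposal is correct and follows essentially the same route as the paper: nondegeneracy and Jordansymmetry of $q_L$ are checked directly (and impose no condition on $\omega$), and the invariance identity is expanded on both sides so that the $\pi$-action terms and the pairings $f([y,z]_L)$, $-\delta g([x,z]_L)$, $h([x,y]_L)$ cancel, leaving exactly $\omega(x,y)(z)=\omega(y,z)(x)$ as the condition. The paper's proof is the same computation, so no further comparison is needed.
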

\begin{proof}
If $x+f$ is orthogonal to  all elements of $L\oplus L^{*}$, then $f(y)=0$ and $g(x)=0$,  which implies that $x=0$ and $f=0$. So the Jordansymmetric bilinear form $q_{L}$ is nondegenerate.

Now suppose that  $x+f,y+g,z+h\in L\oplus L^*$, then
\begin{eqnarray*}
q_{L}([x+f,y+g]_{L\oplus L^{\ast}}, z+h)
&=&q_{L}([x,y]_{L}+\omega(x,y)+\delta\pi(x)g-\pi(y)f, z+h)\\
&=&\omega(x,y)(z)+\delta(\pi(x)g)(z)-\pi(y)(f)(z)+h([x,y]_{L})\\
&=&\omega(x,y)(z)-\delta g([x,z]_{L})+f([y,z]_{L})+h([x,y]_{L}).
\end{eqnarray*}
On the other hand,
\begin{eqnarray*}
q_{L}(x+f,[y+g,z+h]_{L\oplus L^{\ast}})
&=&q_{L}(x+f,[y,z]_{L}+\omega(y,z)+\delta\pi(y)h-\pi(z)g)\\
&=&f([y,z]_{L})+\omega(y,z)(x)+\delta\pi(y)(h)(x)-(\pi(z)g)(x)\\
&=&f([y,z]_{L})+\omega(y,z)(x)+h([x,y]_{L})-\delta g([x,z]_{L}).
\end{eqnarray*}
Hence the lemma follows.
\end{proof}

Now, for a Jordancyclic 2-cocycle $\omega$ we shall call the quadratic hom-Jordan-Lie algebra $(L\oplus L^{*},q_L,\alpha^{'})$ the $T^*$-extension of $L$ (by $\omega$) and denote the hom-Jordan-Lie algebra  $(L\oplus L^{*},[\cdot,\cdot]_{L\oplus L^{\ast}},\alpha^{'})$ by $T_\omega^*L$.

\begin{defn}
Let $L$ be a hom-Jordan-Lie algebra over a field $\K$. We inductively define a derived series
$$(L^{(n)})_{n\geq 0}: L^{(0)}=L,\ L^{(n+1)}=[L^{(n)},L^{(n)}],$$
a central descending series
$$(L^{n})_{n\geq 0}: L^{0}=L,\ L^{n+1}=[L^{n},L],$$
and a central ascending series
$$(C_{n}(L))_{n\geq 0}: C_{0}(L)=0, C_{n+1}(L)=C(C_{n}(L)),$$
where $C(I)=\{a\in L| [a,L]\subseteq I\}$ for a subspace $I$ of $L$.

$L$ is called solvable and nilpotent(of length $k$) if and only if there is a (smallest) integer $k$ such that $L^{(k)}=0$ and $L^{k}=0$, respectively.
\end{defn}

In the following theorem we discuss some properties of  $T_\omega^*L$.

\begin{thm}
Let $(L,[\cdot,\cdot]_{L},\alpha)$ be a hom-Jordan-Lie algebra over a field $\K$.
\begin{enumerate}[(1)]
   \item  If $L$ is solvable (nilpotent) of length $k$, then the $T^{*}$-extension
          $T^{*}_{\omega}L$ is solvable (nilpotent) of length $r$, where $k\leq r\leq k+1$ $(k\leq r\leq2k-1)$.
   \item  If $L$ is decomposed into a direct sum of two  hom-ideals of $L$, so is the trivial $T^{*}$-extension $T^{*}_{0}L$.
\end{enumerate}
\end{thm}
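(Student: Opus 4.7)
The overall strategy is to exploit the canonical projection $\pi_L: T_\omega^* L \to L$, $\pi_L(x+f)=x$, which (by the bracket formula $[x+f,y+g]_{L\oplus L^*} = [x,y]_L + \omega(x,y) + \delta\pi(x)g - \pi(y)f$) is a surjective morphism of hom-Jordan-Lie algebras whose kernel $L^*$ is an abelian hom-ideal of $T_\omega^* L$, since $[f,g]_{L\oplus L^*}=0$ for all $f,g\in L^*$.

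For the solvable statement in (1), induction on $n$ yields $\pi_L((T_\omega^* L)^{(n)}) = L^{(n)}$, so $(T_\omega^* L)^{(r)}=0$ implies $L^{(r)}=0$, hence $r\geq k$. For the upper bound, $(T_\omega^* L)^{(k)}\subseteq L^*$, and since $L^*$ is abelian, $(T_\omega^* L)^{(k+1)} = [(T_\omega^* L)^{(k)},(T_\omega^* L)^{(k)}] = 0$, giving $r\leq k+1$. For the nilpotent statement the same projection argument gives $r\geq k$ and $(T_\omega^* L)^k\subseteq L^*$. Using $[f,x]_{L\oplus L^*} = -\pi(x)f$, any element of $(T_\omega^* L)^{k+m}$ is a sum of expressions $\pm\pi(y_1)\cdots\pi(y_m)f$ with $f\in(T_\omega^* L)^k$, and the identity $(\pi(y_1)\cdots\pi(y_m)f)(z) = \pm f([y_m,[y_{m-1},\ldots,[y_1,z]_L\ldots]_L]_L)$ shows this vanishes once the inner iterated bracket lies in $L^m=0$, giving the crude bound $r\leq 2k$. \emph{The main obstacle} is sharpening this to the claimed $r\leq 2k-1$: the extra degree of vanishing uses that $L^{k-1}$ is central in $L$ (because $[L^{k-1},L]=L^k=0$, hence $\pi(L^{k-1})=0$), together with a careful bookkeeping of the left-nested bracket via the recursion $b_{i+1}^{L^*} = \omega(b_i^L,x_{i+1}) + \delta\pi(b_i^L)f_{i+1} - \pi(x_{i+1})b_i^{L^*}$ (where $b_j^L\in L^{j-1}$), using the Jordancyclic and 2-cocycle properties of $\omega$ to absorb the residual $\omega$-contributions.

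For part (2), a direct-sum decomposition $L = I_1\oplus I_2$ into hom-ideals induces a canonical splitting $L^* = I_1^* \oplus I_2^*$ via $I_i^* = \{f\in L^* : f|_{I_j}=0 \text{ for } j\neq i\}$. With $\omega = 0$ the bracket simplifies to $[x+f,y+g]_{L\oplus L^*} = [x,y]_L + \delta\pi(x)g - \pi(y)f$, and one verifies that each $T_0^* I_i := I_i\oplus I_i^*$ is a hom-ideal of $T_0^* L$: the map $\alpha'$ preserves it since $\alpha(I_i)\subseteq I_i$ forces $(f\circ\alpha)|_{I_j}=0$ for $f\in I_i^*$ and $j\neq i$, while $\pi(x)g\in I_i^*$ whenever $x\in I_i$ or $g\in I_i^*$, since $(\pi(x)g)(z) = -\delta g([x,z]_L)$ vanishes on $z\in I_j$ ($j\neq i$) either because $[x,z]_L=0$ (when $x\in I_i$, $z\in I_j$, using $[I_1,I_2]_L=0$) or because $[x,z]_L\in I_j$ is annihilated by $g\in I_i^*$. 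Finally $[T_0^* I_1, T_0^* I_2]=0$ again from $[I_1,I_2]_L=0$, yielding the direct-sum decomposition $T_0^* L = T_0^* I_1 \oplus T_0^* I_2$ of hom-ideals.
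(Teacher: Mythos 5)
Your treatment of the solvable case and of part (2) is correct and follows essentially the same route as the paper (the paper verifies $[I^*,L]_{L\oplus L^*}(J)=I^*([L,J]_L)=0$ and $[I,L^*]_{L\oplus L^*}(J)=L^*([I,J]_L)=0$, which is exactly your computation with $\pi$). The one genuine gap is the nilpotency upper bound: you prove $(T^{*}_{\omega}L)^{2k}=0$ cleanly, but the theorem asserts $r\leq 2k-1$, and you explicitly leave the passage from $2k$ to $2k-1$ as an unexecuted sketch (``careful bookkeeping \dots\ absorb the residual $\omega$-contributions''). As a proof of the statement as written, that step is simply missing: your identity $(\pi(y_1)\cdots\pi(y_m)f)(z)=\pm f([y_m,[\dots,[y_1,z]_L\dots]_L]_L)$ with $m=k-1$ lands the inner bracket only in $L^{k-1}$, and for a general $\beta\in(T^{*}_{\omega}L)^{k}\cap L^{*}$ one cannot conclude $\beta(L^{k-1})=0$ without controlling the $\omega$-terms inside $\beta$.

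You should know, however, that the paper's own proof of the $2k-1$ bound has exactly this defect: it asserts $\beta([x_1,[\cdots,[x_{k-1},b]_L\cdots]_L]_L)\in\beta(L^{k})$, but the displayed element involves only $k$ factors ($x_1,\dots,x_{k-1},b$) and hence lies in $L^{k-1}$, not $L^{k}$, under the paper's convention $L^{0}=L$, $L^{n+1}=[L^{n},L]$. Indeed, with that convention the bound $2k-1$ already fails for $k=1$: take $L$ abelian (so $L^{1}=0$, length $k=1$) with $\alpha=\mathrm{id}$ and a nonzero Jordancyclic $\omega$ --- for $\delta=1$ any nonzero alternating trilinear form on $L=\K^{3}$ yields such an $\omega$, the cocycle condition being vacuous when all brackets vanish. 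Then $[x+f,y+g]_{L\oplus L^{*}}=\omega(x,y)$ is nonzero while $(T^{*}_{\omega}L)^{2}=0$, so $T^{*}_{\omega}L$ is nilpotent of length $2=2k$ rather than $2k-1=1$. Your ``crude'' bound $2k$ is therefore the correct one in this indexing (it is Bordemann's $2k-1$ translated from the convention in which the series starts at $L^{1}=L$), and the sharpening you sketch cannot be carried out in general; the honest repair is to adjust the exponent in the statement, not to complete the sketch.
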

\begin{proof}
(1) Firstly we suppose that $L$ is solvable of length $k$. Since
$(T^{*}_{\omega}L)^{(n)}/L^{*}\cong L^{(n)}$ and $L^{(k)}=0$, we have $(T^{*}_{\omega}L)^{(k)}\subseteq L^{*}$, which implies $(T^{*}_{\omega}L)^{(k+1)}=0$ because $L^{*}$ is abelian, and it follows that $T^{*}_{\omega}L$ is solvable of length $k$ or $k+1$.

Suppose now that $L$ is nilpotent of length $k$. Since $(T^{*}_{\omega}L)^{n}/L^{*}\cong L^{n}$ and $L^{k}=0$, we have
$(T^{*}_{\omega}L)^{k}\subseteq L^{*}$. Let $\beta\in(T^{*}_{\omega}L)^{k}\subseteq L^{*}, b\in L$, $x_{1}+f_{1}, \cdots, x_{k-1}+f_{k-1}\in T^{*}_{\omega}L$, $ 1\leq i\leq k-1$, we have
\begin{eqnarray*}
&&[[\cdots[\beta,x_1+f_{1}]_{L\oplus L^{\ast}},\cdots]_{L\oplus L^{\ast}},x_{k-1}+f_{k-1}]_{L\oplus L^{\ast}}(b)\\
&=&\delta^{k-1}\beta\ad x_1\cdots\ad x_{k-1}(b)=\beta([x_1,[\cdots,[x_{k-1},b]_{L}\cdots]_{L}]_{L})\in\beta(L^{k})=0.
\end{eqnarray*}
This proves that $(T^{*}_{\omega}L)^{2k-1}=0$. Hence $T^{*}_{w}L$ is nilpotent of length at least $k$ and at most $2k-1$.

(2) Suppose that $0\neq L=I\oplus J$,  where $I$ and $J$ are two nonzero hom-ideals of $(L[\cdot,\cdot]_{L},\alpha)$. Let $I^{*}$ (resp. $J^{*}$) denote the subspace of all
linear forms in $L^{*}$  vanishing on $J$ (resp. $I$). Clearly, $I^{*}$ (resp. $J^{*}$) can canonically be identified with the dual space of $I$ (resp. $J$) and $L^*\cong I^*\oplus J^*$.

Since $[I^*,L]_{L\oplus L^{\ast}}(J)=I^*([L,J]_{L})\subseteq I^*(J)=0$ and $[I,L^*]_{L\oplus L^{\ast}}(J)=L^*([I,J]_{L})\subseteq L^*(I\cap J)=0$,  we have $[I^*,L]_{L\oplus L^{\ast}}\subseteq I^*$ and $[I,L^*]_{L\oplus L^{\ast}}\subseteq I^*$. Then
\begin{eqnarray*}[T^{*}_{0}I,T^{*}_{0}L]_{L\oplus L^{\ast}}&=&[I\oplus I^*,L\oplus L^*]_{L\oplus L^{\ast}}\\
&=&[I,L]_{L}+[I,L^*]_{L\oplus L^{\ast}}+[I^*,L]_{L\oplus L^{\ast}}+[I^*,L^*]_{L\oplus L^{\ast}}\subseteq I\oplus I^*=T^{*}_{0}I.
\end{eqnarray*}
$T^{*}_{0}I$ is a hom-ideal of $L$ and so is $T^{*}_{0}J$ in the same way. Hence $T^{*}_{0}L$ can be decomposed into the direct sum $T^{*}_{0}I\oplus T^{*}_{0}J$ of two nonzero hom-ideals of $T^{*}_{0}L$.
\end{proof}

In the proof of a criterion for recognizing $T^*$-extensions of a hom-Jordan-Lie algebra, we will need the following result.

\begin{lem}\label{lemma3.1}
Let $(L,q_{L},\alpha)$ be a quadratic hom-Jordan-Lie algebra of even dimension $n$ over a field $\K$ and $I$ be an isotropic $n/2$-dimensional subspace of $L$. If $I$ is a hom-ideal of $(L,[\cdot,\cdot]_{L},\alpha)$, then $I$ is abelian.
\end{lem}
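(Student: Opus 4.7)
The plan is to exploit the interplay between the invariance of $q_L$, the isotropy of $I$, and the fact that $I$ is a hom-ideal, then conclude via nondegeneracy of $q_L$. The key observation is a dimension count: since $\dim I = n/2$ and $I \subseteq I^{\perp}$, while nondegeneracy of $q_L$ forces $\dim I^{\perp} = n - \dim I = n/2$, we actually have $I = I^{\perp}$. This maximal isotropy is the structural fact that makes the argument work.

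First, I would pick arbitrary elements $x, y \in I$ and an arbitrary $z \in L$, and compute $q_L([x,y]_L, z)$. Applying invariance of $q_L$ (using the form $f([x,y],z) = f(x,[y,z])$ from the definition of a quadratic hom-Jordan-Lie algebra), I rewrite this as $q_L(x, [y,z]_L)$. Since $I$ is a hom-ideal and $y \in I$, we have $[y,z]_L \in I$. Combined with $x \in I = I^{\perp}$, this gives $q_L(x, [y,z]_L) = 0$, hence $q_L([x,y]_L, z) = 0$.

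Because $z \in L$ was arbitrary, $[x,y]_L$ lies in $L^{\perp}$, which is zero by nondegeneracy of $q_L$. Therefore $[x,y]_L = 0$ for all $x, y \in I$, i.e.\ $I$ is abelian.

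I do not expect any genuine obstacle here: the statement is essentially a one-line consequence of invariance together with maximal isotropy. The only subtlety worth flagging is the implicit dimension count $I = I^{\perp}$, which relies both on $q_L$ being nondegenerate and on $L$ being finite-dimensional; this should be stated explicitly at the start of the proof. No use of the hom-structure map $\alpha$ is needed for this particular result, since the required identity is purely algebraic in the bracket and the bilinear form.
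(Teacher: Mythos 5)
Your proof is correct and follows essentially the same route as the paper: the dimension count giving $I=I^{\perp}$, a single application of the invariance identity to move the bracket, the hom-ideal property to land back in $I=I^{\perp}$, and nondegeneracy to conclude $[I,I]=0$. The only cosmetic difference is that you argue element-wise where the paper argues with subspaces, and your remark that $\alpha$ plays no role is accurate.
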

\begin{proof}
Since dim$I$+dim$I^{\bot}=n/2+\dim I^{\bot}=n$ and $I\subseteq I^{\bot}$, we have $I=I^{\bot}$.
If $I$ is a ideal of $(L,[\cdot,\cdot]_{L},\alpha)$, then $q_{L}(L,[I,I^{\bot}])=q_{L}([L,I],I^{\bot})\subseteq q_{L}(I,I^{\bot})=0$, which implies $[I,I]=[I,I^{\bot}]\subseteq L^{\bot}=0$.
\end{proof}

\begin{thm}
Let $(L,q_{L},\alpha)$ be a quadratic  hom-Jordan-Lie algebra, of even dimension $n$ over a field $\K$ of characteristic not equal to two. Then $(L,q_{L},\alpha)$ is isometric to a $T^{*}$-extension $(T_{\omega}^{*}B,q_{B},\beta^{'})$ if and only if $n$ is even and $(L,[\cdot,\cdot]_{L},\alpha)$ contains an isotropic hom-ideal $I$ of dimension $n/2$. In particular, $B\cong L/I$.
\end{thm}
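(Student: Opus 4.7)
The plan is to prove the two directions separately. The \emph{only if} direction is direct: in $T_{\omega}^{*}B = B \oplus B^{*}$, the subspace $0 \oplus B^{*}$ is an abelian hom-ideal (stable under $\alpha'$ because $\alpha'(f) = f \circ \alpha$, and abelian because the extension bracket vanishes on $B^{*} \times B^{*}$), is isotropic for $q_{B}$, and has dimension $n/2 = \dim B$; the quotient recovers $B$. Transporting this subspace along any isometry $L \cong T_{\omega}^{*}B$ yields an isotropic hom-ideal $I$ of $L$ of dimension $n/2$ with $L/I \cong B$.

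For the \emph{if} direction, start from the isotropic hom-ideal $I$ of dimension $n/2$. Since $\dim I + \dim I^{\bot} = n = 2\dim I$ and $I \subseteq I^{\bot}$, we get $I = I^{\bot}$, and the preceding lemma forces $I$ to be abelian. Put $B := L/I$; because $I$ is a hom-ideal, the bracket and the twist $\alpha$ descend to $B$, producing a hom-Jordan-Lie algebra $(B,[\cdot,\cdot]_{B},\beta)$ with projection $\pi\colon L \to B$.

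The central construction is to choose a vector-space complement $B_{0}$ of $I$ in $L$ that is simultaneously \emph{isotropic} and \emph{$\alpha$-invariant}. Given such a $B_{0}$, identify it with $B$ through $\pi|_{B_{0}}$, and identify $I$ with $B^{*}$ via $\varphi\colon I \to B^{*}$ defined by $\varphi(f)(\pi(b)) := q_{L}(f,b)$ for $b \in B_{0}$; this is well-defined and bijective because the equality $I = I^{\bot}$ makes the pairing $I \times B_{0} \to \K$ perfect, while $\alpha$-invariance of $B_{0}$ together with $\alpha$-self-adjointness of $q_{L}$ makes $\varphi$ intertwine $\alpha|_{I}$ with $\widetilde{\beta}$. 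Define $\omega\colon B \times B \to B^{*}$ by decomposing $[b_{1},b_{2}]_{L} = c_{12} + f_{12}$ with $c_{12} \in B_{0}$ and $f_{12} \in I$, and setting $\omega(\pi(b_{1}),\pi(b_{2})) := \varphi(f_{12})$. The hom-Jordan-Jacobi identity on $L$ then forces $\omega \in Z^{2}(B,B^{*})$ by running the earlier $L \oplus L^{*}$ cocycle lemma in reverse, while the invariance of $q_{L}$ combined with $B_{0}$ and $I$ both isotropic forces $\omega$ to be Jordancyclic by reversing the subsequent quadratic-extension lemma. Finally, $\Phi\colon L \to T_{\omega}^{*}B$ defined by $\Phi(b+f) := \pi(b) + \varphi(f)$ is a linear isomorphism that intertwines $\alpha$ with $\alpha'$, preserves the bracket, and is an isometry for $q_{L}$ and $q_{B}$; the identification $B \cong L/I$ is built into the construction.

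The hard part will be producing the $\alpha$-invariant isotropic complement $B_{0}$. Existence of a merely isotropic complement is standard for nondegenerate Jordansymmetric forms over a field of characteristic not two (this is the essential use of the characteristic hypothesis, via a Witt-type extension argument starting from an arbitrary complement and modifying it by elements of $I$). Making it simultaneously $\alpha$-invariant is the genuine hom-algebraic difficulty: I would handle it either by an averaging argument when $\alpha$ has nice spectral behaviour, or by starting from an arbitrary $\alpha$-invariant complement and correcting $\Phi$ by a 1-cochain, the effect of which is to replace $\omega$ by a cohomologous representative, leaving the isometry class of $T_{\omega}^{*}B$ unchanged by the proposition on isomorphism of central-type extensions.
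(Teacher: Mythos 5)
Your construction coincides with the paper's in every essential respect: the ``only if'' direction transports the abelian isotropic half-dimensional hom-ideal $B^{*}\subseteq T^{*}_{\omega}B$ back along the isometry; the ``if'' direction uses $I=I^{\bot}$ and Lemma 7.9 to make $I$ abelian, picks an isotropic complement $B_{0}$ (the only place characteristic $\neq 2$ is used), identifies $I$ with $(L/I)^{*}$ via $i\mapsto q_{L}(i,\cdot)$, reads off $\omega$ as the $I$-component of $[b_{0},b_{0}']_{L}$, and checks that $b_{0}+i\mapsto p(b_{0})+q_{L}^{*}(i)$ is an isometric isomorphism onto $T^{*}_{\omega}B$. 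So in substance you have reproduced the paper's argument.

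The one point where you go beyond the paper is also where your proof stops being a proof. You are right that for $\Phi(b_{0}+i)=p(b_{0})+q_{L}^{*}(i)$ to intertwine $\alpha$ with $\alpha'=\beta+\tilde{\beta}$ one needs both $\alpha(B_{0})\subseteq B_{0}$ (otherwise the error term $q_{L}^{*}(p_{1}(\alpha(b_{0})))$ survives, $q_{L}^{*}$ being injective) and the self-adjointness $q_{L}(\alpha(i),x)=q_{L}(i,\alpha(x))$, which makes $q_{L}^{*}\circ\alpha=\tilde{\beta}\circ q_{L}^{*}$; the paper imposes neither (Definition 7.3 does not require $\alpha$ to be self-adjoint for $q_{L}$, and the proof only produces an isotropic complement). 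But having isolated the difficulty you do not resolve it: averaging needs unavailable hypotheses on $\alpha$, and the ``correct by a 1-cochain'' idea replaces $\omega$ by a cohomologous cocycle but does not visibly produce an $\alpha$-invariant isotropic complement --- one would have to solve $z(p_{0}(\alpha(b_{0})))=p_{1}(\alpha(b_{0}))+\alpha(z(b_{0}))$ for a map $z:B_{0}\to I$ whose associated bilinear form is Jordan-antisymmetric, and no argument is given that this is always possible. As written, the key existence step is a conjecture. (To be fair, the paper's own proof has exactly the same gap, only unacknowledged; the honest fix is to add $\alpha$-invariance of the complement and self-adjointness of $\alpha$ as hypotheses.)
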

\begin{proof}
($\Longrightarrow$) Since dim$B$=dim$B^{*}$, dim$T^{*}_{\omega}B$ is even. Moreover, it is clear that $B^{*}$ is a hom-ideal of half the dimension of $T^{*}_{\omega}B$ and by the definition of $q_{B}$, we have $q_B(B^*,B^*)=0$, i.e., $B^*\subseteq (B^*)^\bot$ and so $B^*$ is isotropic.

($\Longleftarrow$) Suppose that $I$ is an $n/2$-dimensional isotropic hom-ideal of $L$. By Lemma 7.9,  $I$ is abelian. Let $B=L/I$ and $p: L \rightarrow B$ be the canonical projection. Clearly, $|p(x)|=|x|, \forall x\in L$. Since $\ch \K\neq2$,  we can choose an isotropic complement subspace $B_{0}$ to $I$ in $L$, i.e., $L=B_{0}\dotplus I$ and $B_{0}\subseteq B_{0}^{\bot}$. Then $B_{0}^{\bot}=B_{0}$ since dim$B_0=n/2$.

Denote by $p_{0}$ (resp. $p_{1}$) the projection $L \rightarrow B_{0}$ (resp. $L\rightarrow I$) and let $q_{L}^{*}$
 denote the homogeneous linear map $I \rightarrow B^{*}: i \mapsto q_{L}^{*}(i)$, where $q_{L}^{*}(i)(p(x)):= q_{L}(i,x), \forall x\in L$.
 We claim that $q_{L}^{*}$ is a linear isomorphism. In fact, if $p(x)=p(y)$, then $x-y\in I$, hence $q_{L}(i,x-y)\in q_{L}(I,I)=0$ and
 so $q_{L}(i,x)=q_{L}(i,y)$, which implies $q_{L}^{*}$ is well-defined and it is easily seen that $q_{L}^{*}$ is linear. If
 $q_{L}^{*}(i)=q_{L}^{*}(j)$, then $q_{L}^{*}(i)(p(x))=q_{L}^{*}(j)(p(x)), \forall x\in L$, i.e., $q_{L}(i,x)=q_{L}(j,x)$,
 which implies $i-j\in L^\bot=0$, hence $q_{L}^{*}$ is injective. Note that $\dim I=\dim B^*$, then $q_{L}^{*}$ is surjective.

In addition, $q_{L}^{*}$ has the following property:
\begin{eqnarray*}
q_{L}^{*}([x,i])(p(y))&=&q_{L}([x,i]_{L},y)=-\delta([i,x]_{L},y)=-\delta q_{L}(i,[x,y]_{L})\\
&=&-\delta q_{L}^{*}(i)p([x,y]_{L})=-\delta q_{L}^{*}(i)[p(x),p(y)]_{L}\\
&=&- q_{L}^{*}(i)(\ad p(x)(p(y)))=\delta(\pi(p(x))q_{L}^{*}(i))(p(y))\\
&=&[p(x),q_{L}^{*}(i)]_{L\oplus L^{*}}(p(y)),
\end{eqnarray*}
where $x,y\in L$, $i\in I$. A similar computation shows that
$$q_{L}^{*}([x,i])=[p(x),q_{L}^{*}(i)]_{L\oplus L^{*}},\quad q_{L}^{*}([i,x])=[q_{L}^{*}(i),p(x)]_{L\oplus L^{*}}.$$
Define a homogeneous bilinear map
\begin{eqnarray*}
\omega:~~~~~ B\times B~~~~&\longrightarrow&B^{*}\\
(p(b_0),p(b_0'))&\longmapsto&q_{L}^{*}(p_{1}([b_0,b_0'])),
\end{eqnarray*}
where $b_0,b_0'\in B_{0}.$ Then $|w|=0$ and $w$ is well-defined since the restriction of the projection $p$ to $B_{0}$ is a linear isomorphism.

Let $\varphi$ be the linear map $L \rightarrow B\oplus B^{*}$ defined by $\varphi(b_{0}+i)=p(b_{0})+q_{L}^{*}(i), \forall b_0+i\in B_0\dotplus I=L. $
 Since the  restriction of $p$  to $B_{0}$ and $q_{L}^{*}$ are linear isomorphisms, $\varphi$ is also a linear isomorphism. Note that
\begin{eqnarray*}
&&\varphi([b_0+i,b_0'+i']_{L})=\varphi([b_0,b_0']_{L}+[b_0,i']_{L}+[i,b_0']_{L})\\
&=&\varphi(p_{0}([b_0,b_0']_{L})+p_{1}([b_0,b_0']_{L})+[b_0,i']_{L}+[i,b_0']_{L})\\
&=&p(p_{0}([b_0,b_0']_{L}))+q_{L}^{*}(p_{1}([b_0,b_0']_{L})+[b_0,i']_{L}+[i,b_0']_{L})\\
&=&[p(b_0),p(b_0')]_{L}+\omega(p(b_0),p(b_0'))+[p(b_0),q_{L}^{*}(i')]_{L}+[q_{L}^{*}(i),p(b_0')]_{L}\\
&=&[p(b_0),p(b_0')]_{L}+\omega(p(b_0),p(b_0'))+\delta\pi(p(b_0)(q_{L}^{*}(i'))-\pi(p(b_0')(q_{L}^{*}(i))\\
&=&[p(b_0)+q_{L}^{*}(i),p(b_0')+q_{L}^{*}(i')]_{B\oplus B^{*}}\\
&=&[\varphi(b_0+i),\varphi(b_0'+i')]_{L\oplus L^{*}}.
\end{eqnarray*}
Then $\varphi$ is an isomorphism of  algebras, and so $(B\oplus B^{*},[\cdot,\cdot]_{B\oplus B^{*}},\beta)$ is a hom-Jordan-Lie algebra.
Furthermore, we have
{\setlength\arraycolsep{2pt}
\begin{eqnarray*}
q_{B}(\varphi(b_{0}+i),\varphi(b_{0}'+i'))&=&q_{B}(p(b_{0})+q_{L}^{*}(i),p(b_{0}')+q_{L}^{*}(i'))\\
&=&q_{L}^{*}(i)(p(b_{0}'))+q_{L}^{*}(i')(p(b_{0}))\\
&=&q_{L}(i,b_{0}')+q_{L}(i',b_{0})\\
&=&q_{L}(b_{0}+i,b_{0}'+i'),
\end{eqnarray*}}
then $\varphi$ is isometric. The relation
\begin{eqnarray*}
q_{B}([\varphi(x),\varphi(y)],\varphi(z))&=&q_B(\varphi([x,y]),\varphi(z))\\
&=&q_{L}([x,y],z)=q_{L}(x,[y,z])=q_{B}(\varphi(x),[\varphi(y),\varphi(z)]).
\end{eqnarray*}
 Which implies that $q_B$ is a nondegenerate invariant Jordan-symmetric bilinear form, and so $(B\oplus B^{*}, q_B,\beta^{'})$ is a quadratic hom-Jordan-Lie algebra.
In this way, we get a $T^*$-extension $T^{*}_{\omega}B$ of $B$ and consequently, $(L,q_{L},\alpha)$ and $(T^{*}_{\omega}B,q_{B},\beta^{'})$ are isometric as required.
\end{proof}

Let$(L,[\cdot,\cdot]_{L},\alpha)$ be a hom-Jordan-Lie algebra over a field $\K$, and let $\omega_{1}: L\times L\rightarrow L^{*}$
and $\omega_{2}: L \times L\rightarrow L^{*}$ be two different Jordancyclic 2-cocycles satisfying $|\omega_1|=|\omega_2|=0$. The
 $T^{*}$-extensions $T^{*}_{\omega_{1}}L$ and $T^{*}_{w_{2}}L$ of $L$ are said to be  equivalent if there exists an isomorphism
 of hom-Jordan-Lie algebras  $\phi: T^{*}_{\omega_{1}}L\rightarrow  T^{*}_{\omega_{2}}L$ which is the identity on the hom-ideal $L^{*}$
  and which induces the identity on the factor hom-Jordan-Lie algebra $T^{*}_{\omega_{1}}L/L^{*}\cong L\cong T^{*}_{\omega_{2}}L/L^{*}.$ The
  two $T^{*}$-extensions $T^{*}_{\omega_{1}}L$ and $T^{*}_{\omega_{2}}L$ are said to be  isometrically equivalent if  they are equivalent and $\phi$ is an isometry.
\begin{prop}\label{proposition3.1}
Let $L$ be a hom-Jordan-Lie algebra over a field $\K$ of characteristic not equal to 2, and $\omega_{1}$, $\omega_{2}$ be two Jordancyclic 2-cocycles $L\times L\rightarrow L^{*}$ satisfying $|\omega_i|=0$. Then we have
\begin{enumerate}[(i)]
   \item  $T^{*}_{\omega_{1}}L$ is equivalent to  $T^{*}_{\omega_{2}}L$ if and only if there is $z\in C^1(L, L^{*})$ such that
    \begin{equation}\label{equation3.1}
       \omega_{1}(x, y)-\omega_{2}(x, y)=\delta\pi(x)z(y)-\pi(y)z(x)-z([x,y]_{L}),  \forall x, y\in L.
    \end{equation}

        If this is the case, then the Jordansymmetric part $z_{s}$ of $z$, defined by $z_{s}(x)(y):=\frac{1}{2}(z(x)(y)+z(y)(x))$,
        for all $x,y\in L$, induces a Jordansymmetric invariant bilinear form on $L$.
   \item  $T_{\omega_{1}}^{*}L$ is isometrically equivalent to $T_{\omega_{2}}^{*}L$ if and only if there is $z\in C^1(L, L^{*})$ such that $(33)$ holds for all ~$x,y\in L$ and the Jordansymmetric part $z_{s}$ of $z$ vanishes.
\end{enumerate}
\end{prop}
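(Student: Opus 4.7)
The plan for (i) is to show that any equivalence $\phi\colon T^*_{\omega_1}L\to T^*_{\omega_2}L$ is forced to have a very rigid form, and that the condition for $\phi$ to respect the bracket is exactly equation (33). Because $\phi$ restricts to the identity on the hom-ideal $L^*$ and induces the identity on the quotient $L\cong T^*_{\omega_i}L/L^*$, for every $x\in L$ there must exist a unique element $z(x)\in L^*$ such that $\phi(x+f)=x+z(x)+f$ for all $f\in L^*$. Linearity of $\phi$ forces $z\colon L\to L^*$ to be linear, and the compatibility $\phi\circ\alpha'=\alpha'\circ\phi$ is equivalent to $z(\alpha(x))=z(x)\circ\alpha$, so that $z\in C^1_\alpha(L,L^*)$.

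Next I would expand both sides of $\phi([x+f,y+g]_{L\oplus L^{\ast}})=[\phi(x+f),\phi(y+g)]_{L\oplus L^{\ast}}$ using the bracket formula (30). The left-hand side produces $[x,y]_L+z([x,y]_L)+\omega_1(x,y)+\delta\pi(x)g-\pi(y)f$, while the right-hand side produces $[x,y]_L+\omega_2(x,y)+\delta\pi(x)(z(y)+g)-\pi(y)(z(x)+f)$. The terms involving $f$ and $g$ cancel identically, and what remains is precisely $(33)$. The converse is then a routine verification: given $z\in C^1(L,L^*)$ satisfying $(33)$, the map $\phi(x+f):=x+z(x)+f$ is a linear isomorphism equal to the identity on $L^*$ and inducing the identity on the quotient, and the same two-sided computation shows it respects the bracket.

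For the statement that $z_s$ induces a Jordansymmetric invariant bilinear form on $L$, set $B(x,y):=z_s(x)(y)$; Jordansymmetry is immediate. For invariance, I would evaluate (33) at a third argument $w\in L$, obtaining $(\omega_1-\omega_2)(x,y)(w)=\delta\pi(x)z(y)(w)-\pi(y)z(x)(w)-z([x,y]_L)(w)$, and use the formula $\pi(x)f(w)=-\delta f([x,w]_L)$ to rewrite the right side in terms of $z(\cdot)([\cdot,\cdot]_L)$. Because both $\omega_1$ and $\omega_2$ are Jordancyclic, so is $\omega_1-\omega_2$, which gives $(\omega_1-\omega_2)(x,y)(w)=(\omega_1-\omega_2)(y,w)(x)$. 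Writing out the two ways of expanding this equality and symmetrizing $z$ into $z_s$ yields $B([x,y],w)=B(x,[y,w])$ after cancellation. This bookkeeping is the main technical hurdle; the sign conventions on $\delta$ and on $\pi$ must be tracked carefully.

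For (ii), note that the equivalence $\phi$ of (i) is an isometry iff $q_L(\phi(x+f),\phi(y+g))=q_L(x+f,y+g)$ for all $x,y\in L$, $f,g\in L^*$. Since $q_L(x+f,y+g)=f(y)+g(x)$ and $\phi(x+f)=x+z(x)+f$, the isometry condition simplifies to $z(x)(y)+z(y)(x)=0$ for all $x,y$; using $\mathrm{ch}\,\K\neq 2$ this is equivalent to $z_s=0$. Combined with (i) this gives (ii).
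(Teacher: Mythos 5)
Your proposal is correct and follows essentially the same route as the paper's proof: writing the equivalence as $\Phi(x+f)=x+z(x)+f$, expanding the bracket compatibility to obtain (33), deriving the invariance of $z_s$ by evaluating at a third argument and invoking Jordancyclicity of $\omega_1$ and $\omega_2$, and reducing the isometry condition to $2z_s(x)(y)=0$. The only caveat is the sign of $\pi(x)f(w)$ (the paper's computations use $\pi(x)f(w)=-f([x,w]_L)$ rather than $-\delta f([x,w]_L)$), but you explicitly flag that this bookkeeping must be tracked, so the argument is sound.
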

\begin{proof}
(i) $T^{*}_{\omega_{1}}L$ is equivalent to  $T^{*}_{\omega_{2}}L$ if and only if there is an isomorphism of hom-Jordan-Lie algebras $\Phi: T_{\omega_{1}}^{*}L\rightarrow T_{\omega_{2}}^{*}L$ satisfying $\Phi|_{L^*}=1_{L^*}$ and $x-\Phi(x)\in L^*, \forall x\in L$.

Suppose that $\Phi: T_{\omega_{1}}^{*}L\rightarrow T_{\omega_{2}}^{*}L$ is an isomorphism of hom-Jordan-Lie algebra and define a linear map $z: L\rightarrow L^{*}$ by $z(x):=\Phi(x)-x$, then $z\in C^1(L, L^{*})$ and for all $x+f,y+g\in T^{*}_{\omega_{1}}L$, we have
\begin{eqnarray*}
&&\Phi([x+f,y+g]_{\Omega})\\
&=&\Phi([x,y]_{L}+\omega_{1}(x,y)+\delta\pi(x)g-\pi(y)f)\\
&=&[x,y]_{L}+z([x,y]_{L})+\omega_{1}(x,y)+\delta\pi(x)g-\pi(y)f.
\end{eqnarray*}
On the other hand,
\begin{eqnarray*}
&&[\Phi(x+f),\Phi(y+g)]\\
&=&[x+z(x)+f,y+z(y)+g]\\
&=&[x,y]_{L}+\omega_2(x,y)+\delta\pi(x)g+\delta\pi(x)z(y)-\pi(y)z(x)-\pi(y)f.
\end{eqnarray*}
Since $\Phi$ is an isomorphism, (32) holds.

Conversely, if there exists $z\in C^{1}(L, L^{*})$ satisfying (30), then we can define
 $\Phi: T_{\omega_{1}}^{*}L\rightarrow T_{\omega_{2}}^{*}L$ by $\Phi(x+f):=x+z(x)+f$. It is easy to
 prove that $\Phi$ is an isomorphism of hom-Jordan-Lie algebras such that $\Phi|_{L^*}=\Id_{L^*}$ and $x-\Phi(L)\in L^*, \forall x\in L$,
 i.e. $T^{*}_{\omega_{1}}L$ is equivalent to  $T^{*}_{\omega_{2}}L$.

Consider the Jordanymmetric bilinear form $q_{L}: L\times L\rightarrow \K, (x,y)\mapsto z_s(x)(y)$ induced by $z_s$. Note that
\begin{eqnarray*}
&&\omega_{1}(x, y)(m)-\omega_{2}(x, y)(m)\\
&=&\delta\pi(x)z(y)(m)-\pi(y)z(x)(m)-z([x,y]_{L})(m)\\
&=&-\delta z(y)([x,m]_{L})+z(x)([y,m]_{L})-z([x,y]_{L})(m),
\end{eqnarray*}
and
\begin{eqnarray*}
&&\omega_{1}(y,m)(x)-\omega_{2}(y,m)(x)\\
&=&\delta\pi(y)z(m)(x)-\pi(m)z(y)(x)-z([y,m]_{L}(x))\\
&=&z(m)([x,y]_{L})-\delta z(y)([x,m]_{L})-z([y,m]_{L})(x).
\end{eqnarray*}
Since both $\omega_{1}$ and $\omega_{2}$ are Jordancyclic, the right hand sides of above two equations are equal. Hence
\begin{eqnarray*}
&&-\delta z(y)([x,m]_{L})+z(x)([y,m]_{L})-z([x,y]_{L})(m)\\
&=&z(m)([x,y]_{L})-\delta z(y)([x,m]_{L})-z([y,m]_{L})(x).
\end{eqnarray*}
That is,
\begin{eqnarray*}
z(x)([y,m]_{L})+z([y,m]_{L})(x)=z([x,y]_{L})(m)+z(m)([x,y]_{L}).
\end{eqnarray*}
Since ch$\K\neq2$, $q_{L}(x,[y,m])=q_{L}([x,y],m)$, which proves the invariance of the Jordansymmetric bilinear form $q_{L}$ induced by $z_s$.

(ii) Let the isomorphism $\Phi$ be defined as in (i). Then for all $x+f, y+g\in L\oplus L^{*}$, we have
\begin{eqnarray*}
&&q_{B}(\Phi(x+f),\Phi(y+g))=q_{B}(x+z(x)+f,y+z(y)+g)\\
&=&z(x)(y)+f(y)+z(y)(x)+g(x)\\
&=&z(x)(y)+z(y)(x)+f(y)+g(x)\\
&=&2z_s(x)(y)+q_{B}(x+f, y+g).
\end{eqnarray*}
Thus, $\Phi$ is an isometry if and only if $z_{s}=0$.
\end{proof}

\end{document}